\date{\today}
\newtheorem{thm}{Theorem}[section]
\newtheorem{cor}[thm]{Corollary}
\newtheorem{prop}[thm]{Proposition}
\theoremstyle{definition}
\theoremstyle{remark}
\newtheorem{rem}[thm]{Remark}
\numberwithin{equation}{section}
\newcommand{\R}{\mathbb R}
\newcommand{\m}{\bf{m}}
\newcommand{\Z}{\mathbb Z}
\newcommand{\T}{\mathbb T}
\newcommand{\C}{{\mathbb C}}
\title[Holomorphic Sobolev spaces]
{Segal-Bargmann transforms and \\Holomorphic Sobolev spaces of fractional order}
\author[ S. Thangavelu]{ Sundaram Thangavelu}
\address[S. Thangavelu]{Department of Mathematics\\
Indian Institute of Science\\
560 012 Bangalore, India}
\email{veluma@iisc.ac.in}
\begin{document}

\maketitle

\begin{abstract} In this note we investigate the image of Sobolev spaces  of fractional order on a compact Lie group $ K $ under the Segal-Bargmann transform. We show that the image can be characterised  in terms of certain weighted Bergman spaces of holomorphic functions on the complexification extending a theorem of Hall and Lewkeeratiyutkul. We also treat the heat kernel transform associated to the Hermite operator.

 \end{abstract}

\section{Introduction} 
Let $ K $ be a connected, compact Lie group with Lie algebra $ \mathfrak{k}.$ Fix an $ Ad$-$K $ invariant inner product $ \langle , \rangle $ on $ \mathfrak{k}$ and choose an orthonormal basis $ X_1, X_2,....,X_n $ which are viewed as left invariant vector fields on $ K.$  Then $ \Delta = \sum_{j=1}^n X_j^2 $ turns out to be the Laplace-Beltrami operator  for the bi-invariant metric determined by $ \langle , \rangle $  on $K $. Let $ G $ stand for the universal complexification of $ K $ which is a complex Lie group whose Lie algebra is given by $ \mathfrak{k}+i\mathfrak{k}.$ For any irreducible unitary representation $ \pi $ of $ K $ on a Hilbert space $ H_\pi $ we let $ d_\pi $ stand for the dimension of $ H_\pi.$ For any $ f \in L^2(K) $ we let
$$ \pi(f) = \int_K f(k) \pi(k) dk $$
stand for the Fourier transform of $ f.$  The Plancherel theorem reads as
$$ \int_K |f(k)|^2 dk = \sum_{\pi \in \widehat{K}} d_\pi \| \pi(f)\|^2 $$
where $ \widehat{K} $ is the unitary dual of $ K $ and $ \| \pi(f)\| $ stands for the Hilbert-Schmidt norm of $ \|\pi(f)\|.$

Any $ \pi \in \widehat{K} $ can be analytically continued to $ G $  whose entries are then holomorphic functions on $ G.$  Let $ dg $ stand for the Haar measure on $ G$. By $ \mathcal{H}L^2(G,\nu dg) $ we mean $ L^2(G,\nu dg)\cap \mathcal{O}(G) $  for any density function $ \nu(g) $ on $ G.$ In his celebrated paper \cite{H} Hall has proved the following Paley-Wiener type theorem for compact Lie groups.

\begin{thm} [Hall] Let $ \nu $ be a $K$ bi-invariant function on $ G $ which is locally bounded away from zero and assume that for any $ \pi \in \widehat{K} $ the integrals
$ \sigma(\pi) = d_\pi^{-1} \int_G \|\pi(g^{-1})||^2  \nu(g) dg $ are finite. Then $ f \in L^2(K) $ has a holomorphic extension $ F $ to $ G $ which belongs to $\mathcal{H}L^2(G,\nu dg) $ if and only if $$ \sum_{\pi \in \widehat{K}} d_\pi \|\pi(f)\|^2 \sigma(\pi) < \infty.$$ Moreover, if that happens then we have the equality of norms:
$$ \int_G |F(g)|^2 \nu(g) dg = \sum_{\pi \in \widehat{K}} d_\pi \|\pi(f)\|^2 \sigma(\pi).$$
\end{thm}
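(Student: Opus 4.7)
The plan is to reduce everything to Schur-type orthogonality relations on $G$, weighted by the $K$-bi-invariant density $\nu$. First I would write the holomorphic extension $F$ of $f$ via its Peter--Weyl expansion. Since $f(k)=\sum_{\pi\in\widehat K}d_\pi\tr(\pi(f)\pi(k)^{*})$ and $\pi(k)^{*}=\pi(k^{-1})$ admits a holomorphic extension $g\mapsto \pi(g^{-1})$ to $G$, the natural candidate for $F$ is
\[
F(g)=\sum_{\pi\in\widehat K} d_\pi\,\tr\bigl(\pi(f)\pi(g^{-1})\bigr).
\]
I would begin by assuming $f$ is a finite Peter--Weyl sum (so $F$ is automatically a finite sum of entries of analytically continued representations, hence holomorphic), derive the norm identity, and then extend to general $f$ by a density/monotone-convergence argument.

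For the norm identity, the key step is to compute, for two irreducibles $\pi_1,\pi_2\in\widehat K$,
\[
I(\pi_1,\pi_2)_{ij,kl}=\int_G \pi_1(g^{-1})_{ij}\,\overline{\pi_2(g^{-1})_{kl}}\,\nu(g)\,dg.
\]
Using that $\nu$ is $K$-bi-invariant and that Haar measure on $G$ restricted to $K$-double cosets behaves well under the substitutions $g\mapsto k_1gk_2$, I would average the integrand over $K\times K$. Right translation forces, via Schur's lemma applied to the operator $A_\pi=\int_G\pi(g^{-1})^{*}\pi(g^{-1})\nu(g)\,dg$, that $A_\pi$ commutes with the unitary action of $K$ on $H_\pi$ and is therefore a scalar $c_\pi\,\mathrm{Id}$. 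Tracing gives
\[
c_\pi=d_\pi^{-1}\tr(A_\pi)=d_\pi^{-1}\int_G\|\pi(g^{-1})\|^{2}\nu(g)\,dg=\sigma(\pi).
\]
Left translation and a second application of Schur then annihilate the cross terms $\pi_1\neq\pi_2$ and diagonalize the within-$\pi$ integral, yielding $I(\pi,\pi)_{ij,kl}=d_\pi^{-1}\sigma(\pi)\,\delta_{ik}\delta_{jl}$ (and $0$ when $\pi_1\neq\pi_2$). Expanding $|F(g)|^{2}$ as a double sum, interchanging sum and integral (legitimate for a finite expansion), and plugging in this orthogonality relation collapses the double sum to $\sum_\pi d_\pi\|\pi(f)\|^{2}\sigma(\pi)$.

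The main obstacle is the passage from finite expansions to general $f$, especially in the direction where one only assumes $\sum_\pi d_\pi\|\pi(f)\|^{2}\sigma(\pi)<\infty$ and must actually produce the holomorphic extension $F$. Here I would truncate to $F_N(g)=\sum_{\pi\in S_N} d_\pi\tr(\pi(f)\pi(g^{-1}))$ for an exhausting family $S_N\uparrow\widehat K$; the finite-sum identity makes $\{F_N\}$ Cauchy in $L^{2}(G,\nu\,dg)$, so $F_N\to F$ there. To conclude $F\in\mathcal O(G)$ one needs local uniform convergence of $F_N$ on $G$: this follows because $\nu$ is locally bounded below, so $L^{2}(G,\nu\,dg)$-convergence of holomorphic functions on a compact set $\Omega\subset G$ implies, via the mean value/Bergman reproducing property, uniform convergence on compact subsets of $\Omega$. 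The converse direction is then immediate: given a holomorphic $F$ in $\mathcal H L^{2}(G,\nu\,dg)$ extending $f$, approximate $F$ by $F_N$ (projections onto finite pieces of Peter--Weyl) and apply the norm identity and Fatou/monotone convergence to pass to the limit.
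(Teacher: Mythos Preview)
The paper does not supply its own proof of this theorem; it is quoted from Hall's paper \cite{H} and used as a black box in the subsequent arguments (see the sentence ``In his celebrated paper \cite{H} Hall has proved the following Paley-Wiener type theorem''). So there is no proof in the paper to compare your proposal against.

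That said, your outline is essentially the standard argument and is correct in spirit. The two-step Schur computation---first averaging over one copy of $K$ to kill the cross terms $\pi_1\neq\pi_2$ and reduce to a single $\pi$, then averaging over the other copy of $K$ to see that the remaining operator is scalar with value $\sigma(\pi)$---is exactly how Hall proceeds, and your handling of the passage from finite Peter--Weyl sums to general $f$ via Cauchy sequences in $L^2(G,\nu\,dg)$ together with the ``$\nu$ locally bounded below $\Rightarrow$ $L^2$-limits of holomorphic functions are holomorphic'' step is also the right mechanism. One small bookkeeping slip: with your choice $A_\pi=\int_G\pi(g^{-1})^{*}\pi(g^{-1})\nu(g)\,dg$, it is \emph{left} $K$-translation (not right) that gives $A_\pi=\pi(k)A_\pi\pi(k)^{-1}$ and hence scalarity; right translation leaves this particular $A_\pi$ unchanged because $\pi(k)$ is unitary. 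If you instead form $\int_G\pi(g^{-1})\pi(g^{-1})^{*}\nu(g)\,dg$, then right translation does the job. Either way the conclusion is the same.
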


The above theorem contains the result on Segal-Bargmann transform on $ K.$ Let $ q_t $ stand for the heat kernel associated to $ \frac{1}{2}\Delta$ which is explicitly given by
$$ q_t(k)  =  \sum_{\widehat{K}} d_\pi  e^{-\frac{1}{2}t\lambda_\pi^2} \chi_\pi(k) $$ 
where $ \chi_\pi $ is the character of $ \pi $ and $ \lambda_\pi $ is determined by the condition $ \Delta \chi_\pi(k) = -\lambda_\pi^2 \chi_\pi(k).$ It is known that $ q_t(k) >0 $ and satisfies $ q_t(k) = q_t(k^{-1}) $ and $ q_t(kk') = q_t(k'k).$ Because of these two properties $ f \ast q_t = q_t \ast f $ for any $ f \in L^2(K) $ and $ u(k,t) = f \ast q_t(k,t) $ solves the heat equation for $ \frac{1}{2}\Delta $ with initial condition $ f.$ It is known that $ q_t $ extends to $ G $ as a holomorphic function and consequently  for any $ f \in L^2(K) $ the solution $ f \ast q_t $ also has a holomorphic extension to $ G .$ The map $ C_t $ which takes $ f $ into the holomorphic function $ F(g) = f \ast q_t(g) $ is known as the Segal-Bargmann transform.

The Lie algebra $ \mathfrak{g} $ of $ G $ viewed as a real Lie algebra has dimension $ 2n $ and the inner product $ \langle , \rangle $ on $ \mathfrak{k} $ can be extended to $ \mathfrak{g} $ by setting
$$ \langle X_1+JY_1, X_2+JY_2 \rangle =\langle X_1, X_2 \rangle + \langle Y_1,Y_2 \rangle, \,\, X_i,Y_i \in \mathfrak{k} $$
where $ J $ stands for multiplication by $ i.$ 
Then $ \Delta_G $ turns out to be a left invariant differential operator on $ G $ which is  $ K $ bi-invariant. Let $ \mu_t(g) $ stand for the heat kernel associated to $ \frac14 \Delta_G$
so that $ \mu_t \ast f $ solves the heat equation. We  define $ \nu_t(g)  = \int_K \mu_t(kg) dk$  which is $K$ bi-invariant. Viewed as a subgroup of $ G , K $ is maximal compact and so $ G/K $ becomes a noncompact Riemannian symmetric space equipped with the left invariant metric defined by the above inner product.  We can identify  $ \Delta_G $ with the  Laplace-Beltrami operator on $ G/K$ and $ \nu_t $ is the bi-invariant heat kernel.

With these notations we can calculate that
\begin{equation} d_\pi^{-1} \int_G \|\pi(g^{-1})||^2  \nu_t(g) dg = e^{t\lambda_\pi^2} \end{equation}
and Theorem 1.1 leads to the following result for the Segal-Bargmann transform: for any $ f \in L^2(K) $ 
$$  \int_G | f \ast q_t(g)|^2 \nu_t(g) dg = \int_K |f(k)|^2 dk.$$
Thus the map $ C_t $ which takes $ f $ into $ f \ast q_t(g) $ is an isometry from $ L^2(K) $ into $ \mathcal{H}L^2(G,\nu_t dg) $ and it is also known that this map is onto. We can restate this as follows: A holomorphic function $ F $ on $ G $  belongs to $ L^2(G,\nu_t dg) $ if and only if $ F = f \ast q_t $ for some $ f \in L^2(K).$ This characterises the image of $ L^2(K) $ under the Segal-Bargmann transform.

Using spectral theorem we can define the fractional powers $ (-\Delta)^{s/2} $ for any $ s \in \R.$ The domain of this operator is the (homogeneous) Sobolev space $ H^s(K) $ consisting of distributions $ f $ for which 
$$  \sum_{\pi \in \widehat{K}} d_\pi\, \lambda_\pi^{s}\, \| \pi(f)\|^2 < \infty.$$
For $ s\geq 0 $ we note that $ H^s(K) \subset L^2(K) $ and hence we can restrict $ C_t $ to $ H^s(K) $ and ask for a characterisation of its image under the Segal-Bargmann transform.
In \cite{HL} Hall and Lewkeeratiyutkul have considered this problem for $ s= 2m $ an even integer. They have introduced holomorphic Sobolev spaces $ \mathcal{H}^{2m}(G,\nu_t) $
and shown that $ C_t $ takes $ H^s(K ) $ onto $\mathcal{H}^{2m}(G,\nu_t).$ These Sobolev spaces are defined as follows. By considering $ \Delta $ as a left invariant differential operator on $ G $ and so we can talk about $ \Delta^{m}F $ for any holomorphic function on $ G.$  Note that these are defined and holomorphic on $ G $ but it is not necessarily true that they are in $ L^2(G, \nu_t) $ even if we start with $ F \in \mathcal{H}L^2(G,\nu_t).$ By defining $\mathcal{H}^{2m}(G,\nu_t) $ to be the subspace of $ \mathcal{H}L^2(G,\nu_t) $ consisting of those $ F $ for which $  \Delta^{m}F \in \mathcal{H}L^2(G,\nu_t).$

\begin{thm}[Hall-Lewkeeratiyutkul] For any non-negative integer $ m $ the Segal-Bargmann transform $ C_t $ takes $ H^{2m}(K) $ onto $ \mathcal{H}^{2m}(G,\nu_t) .$ Moreover, it is possible to find a positive weight function $ w_{2m}(g) $ such that $  \mathcal{H}^{2m}(G,\nu_t) = \mathcal{H}L^2(G, w_{2m} \nu_t).$
\end{thm}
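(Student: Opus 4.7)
The plan is to reduce everything to the spectral side via Hall's Theorem 1.1 applied to $\nu_t$, exploiting that $\Delta$, viewed as a left-invariant operator on $G$, commutes with convolution by the heat kernel. For $f$ a finite linear combination of matrix coefficients (dense in every Sobolev space) set $F = f*q_t$. Since $\Delta$ is left-invariant, differentiation in the $g$-variable of $F(g)=\int_K f(h)q_t(h^{-1}g)\,dh$ transfers onto $q_t$, giving $\Delta^m F = f*(\Delta^m q_t)$; using centrality of $q_t$, this also equals $(\Delta^m f)*q_t = C_t(\Delta^m f)$. Applying the Segal-Bargmann isometry ($L^2(K)\to\mathcal HL^2(G,\nu_t)$) from Theorem 1.1 to $\Delta^m f$ yields
$$\int_G |\Delta^m F|^2\,\nu_t\,dg \;=\;\int_K |\Delta^m f|^2\,dk \;=\;\sum_{\pi\in\widehat K} d_\pi \lambda_\pi^{4m}\|\pi(f)\|^2,$$
which together with the $L^2$ identity shows $\|F\|^2_{\mathcal HL^2(\nu_t)}+\|\Delta^mF\|^2_{\mathcal HL^2(\nu_t)} = \sum d_\pi(1+\lambda_\pi^{4m})\|\pi(f)\|^2$, the squared norm of $f$ in $H^{2m}(K)$.

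This immediately exhibits $C_t$ as an isometry from $H^{2m}(K)$ into $\mathcal H^{2m}(G,\nu_t)$; density closes up the identity on all of $H^{2m}(K)$. For surjectivity, given $F\in\mathcal H^{2m}(G,\nu_t)\subset\mathcal HL^2(G,\nu_t)$, the $L^2$-level surjectivity of $C_t$ produces $f\in L^2(K)$ with $F=C_tf$, and a second application to $\Delta^m F\in\mathcal HL^2(G,\nu_t)$ produces $h\in L^2(K)$ with $\Delta^mF=C_th$. The intertwining relation $\Delta^m C_t f = C_t(\Delta^m f)$, valid at the distributional level, combined with injectivity of $C_t$, then forces $\Delta^m f=h\in L^2(K)$, so $f\in H^{2m}(K)$.

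For the weight function I will seek a positive $K$-bi-invariant $w_{2m}$ on $G$, locally bounded away from zero, such that
$$\sigma_{w_{2m}\nu_t}(\pi)\;:=\;d_\pi^{-1}\int_G \|\pi(g^{-1})\|^2\, w_{2m}(g)\,\nu_t(g)\,dg$$
is comparable to $(1+\lambda_\pi^{4m})\,e^{t\lambda_\pi^2}$ uniformly in $\pi\in\widehat K$. Once such a $w_{2m}$ is in hand, Theorem 1.1 identifies $\mathcal HL^2(G,w_{2m}\nu_t)$ with the space of $F=C_tf$ satisfying $\sum d_\pi(1+\lambda_\pi^{4m})\|\pi(f)\|^2<\infty$, which by the first part equals $\mathcal H^{2m}(G,\nu_t)$ with equivalent norms. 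To build $w_{2m}$ I will work on the noncompact symmetric space $G/K$ with Harish-Chandra spherical Fourier analysis: since $1+\lambda_\pi^{4m}$ is a polynomial in $\lambda_\pi^2$, on bi-invariant test functions it corresponds to a linear combination of powers of $\Delta_G$ acting on $\nu_t$, and one may take a concrete positive candidate such as $w_{2m}(g)=1+d(g,K)^{4m}$ (distance in the $G/K$ metric), whose two-sided spectral bound is then verified via Gaussian estimates on $\nu_t$.

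The main obstacle is precisely this last step: ensuring that a natural positive weight produces the correct polynomial multiplier $1+\lambda_\pi^{4m}$ on the spectral side. The delicate part is that the decay of $\nu_t$ in Cartan coordinates interacts nontrivially with the $K$-type spectrum, so the matching of $\sigma_{w_{2m}\nu_t}(\pi)$ with $(1+\lambda_\pi^{4m})e^{t\lambda_\pi^2}$ requires sharp upper and lower heat-kernel bounds on $G/K$ (equivalently, control of the Harish-Chandra $\mathbf{c}$-function), rather than the soft arguments that sufficed for Parts 1 and 2.
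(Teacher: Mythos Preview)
This theorem is not proved in the paper; it is quoted from \cite{HL} as background. What the paper does establish, in Section~2, is the intertwining relation $\Delta_\C(f\ast q_t)=(\Delta f)\ast q_t$ and, more generally, $(-\Delta_\C)^{s/2}(f\ast q_t)=((-\Delta)^{s/2}f)\ast q_t$, from which the isometric isomorphism $C_t:H^s(K)\to\mathcal H^s(G,\nu_t)$ follows immediately via the $L^2$ Segal--Bargmann identity. Your argument for the first assertion---transfer $\Delta^m$ through the convolution, apply the isometry to $\Delta^m f$, and read off surjectivity from the $L^2$-level surjectivity plus injectivity of $C_t$---is exactly this line of reasoning and is correct.

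For the second assertion, however, you have not given a proof. You correctly reduce the question, via Theorem~1.1, to exhibiting a positive $K$-bi-invariant $w_{2m}$ with $d_\pi^{-1}\int_G\|\pi(g^{-1})\|^2 w_{2m}(g)\nu_t(g)\,dg$ comparable to $(1+\lambda_\pi^{4m})e^{t\lambda_\pi^2}$. But your candidate $w_{2m}(g)=1+d(g,K)^{4m}$ is only a guess, and the sentence ``whose two-sided spectral bound is then verified via Gaussian estimates on $\nu_t$'' is a promissory note rather than an argument. You yourself flag this as the main obstacle in the final paragraph. In the original \cite{HL} the weight is not constructed by guessing a geometric quantity and appealing to heat-kernel bounds; it is obtained by an integration-by-parts identity that moves the left-invariant derivatives in $\|\Delta^m F\|_{L^2(\nu_t)}^2$ off $F$ and onto $\nu_t$, producing an explicit differential expression $P_{2m}\nu_t/\nu_t$ whose positivity and polynomial growth are then checked directly. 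So the gap in your proposal is real: without either carrying out that integration-by-parts construction or actually proving the two-sided spectral comparison for your distance weight, the ``Moreover'' clause remains unproved.
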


The same problem for $ s < 0 $ has been taken up in \cite{T3} where it has been shown that the image of $ H^s(K) $ is a weighted Bergman space on $ G.$ For non-integral $ s >0 $ the problem remains open. The aim of this note is precisely to address this problem.  In order to define holomorphic Sobolev spaces of fractional order we make use of the complexified  Laplacian $ \Delta_\C $ acting on $\mathcal{H}L^2(G,\nu_t).$ For  any  $ \gamma > 0 $ we define a weight function
$$  w_{t,\gamma}(g)   = \frac{1}{\Gamma(2\gamma)} \int_0^t (t-r)^{2\gamma-1} \, \nu_r(g) \,dr $$ 
Given $ s > 0 $ fix  any integer $ m $ such that $ 0 <s < 2m.$  We say that a function $ F $ from $ \mathcal{H}L^2(G,\nu_t)$ belongs to $
\mathcal{H}_0^s(G,\nu_t) $ if and only if $ (-\Delta_\C)^m F \in \mathcal{H}L^2(G, w_{t,m-s/2}).$ We equip this space with the norm defined by
$$ \|F||_{(s)}^2  = \int_{G} |F(g)|^2 \nu_t(g) + \int_G |(-\Delta_\C)^mF(g)|^2 w_{t,m-s/2}(g) dg .$$ We remark that as $ s \rightarrow 2m $ the weight function $ w_{t,m-s/2} $ converges to $ \nu_t $ and hence the new definitions coincides with the original definition given in \cite{HL}. With this definition we have

\begin{thm} For any  $ s \geq 0$  the Segal-Bargmann transform $ C_t : H^{s}(K) \rightarrow  \mathcal{H}_0^{s}(G,\nu_t) $ is an isomorphism. 
\end{thm}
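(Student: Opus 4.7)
The plan is to apply Hall's Paley--Wiener theorem (Theorem 1.1) with the bi-invariant weight $w_{t,\gamma}$ where $\gamma := m - s/2 > 0$, thereby reducing the characterisation of $\mathcal{H}_0^s(G,\nu_t)$ to a single spectral series in the Fourier coefficients $\pi(f)$ that can be compared to the one defining $H^s(K)$.

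I would begin by checking the hypotheses of Theorem 1.1 for $w_{t,\gamma}$: positivity and $K$-bi-invariance are immediate from its definition as a positive superposition of the bi-invariant densities $\nu_r$, local boundedness away from zero follows from continuity and strict positivity of each $\nu_r$, and one needs each integral
\begin{equation*}
\sigma_\gamma(\pi) := d_\pi^{-1}\int_G \|\pi(g^{-1})\|^2\, w_{t,\gamma}(g)\,dg
\end{equation*}
to be finite. The key calculation---Fubini combined with equation (1.1) and the substitutions $u = t-r$, $v = u\lambda_\pi^2$---gives
\begin{equation*}
\sigma_\gamma(\pi) = \frac{1}{\Gamma(2\gamma)}\int_0^t (t-r)^{2\gamma-1}\,e^{r\lambda_\pi^2}\,dr = \frac{\lambda_\pi^{-4\gamma}\,e^{t\lambda_\pi^2}}{\Gamma(2\gamma)}\,\int_0^{t\lambda_\pi^2} v^{2\gamma-1}\,e^{-v}\,dv,
\end{equation*}
so in particular $\sigma_\gamma(\pi)$ is finite for every $\pi\in\widehat K$.

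On the Fourier side, writing $F = C_t f = f * q_t$ and using $\pi(q_t) = e^{-t\lambda_\pi^2/2}\,I$ together with the fact that $\Delta_\C$ extends $\Delta$ holomorphically to $G$, one obtains $\pi\bigl((-\Delta_\C)^m F|_K\bigr) = \lambda_\pi^{2m}\,e^{-t\lambda_\pi^2/2}\,\pi(f)$. Feeding these two pieces into Theorem 1.1 with the weight $w_{t,m-s/2}$, the Gaussian factors cancel exactly (because $4m - 4\gamma = 2s$) and the definition of $\|F\|_{(s)}^2$ collapses to
\begin{equation*}
\|F\|_{(s)}^2 = \sum_{\pi\in\widehat K} d_\pi\, \|\pi(f)\|^2\,\Bigl[\,1 + \lambda_\pi^{2s}\,\frac{1}{\Gamma(2m-s)}\int_0^{t\lambda_\pi^2} v^{2m-s-1}\,e^{-v}\,dv\,\Bigr].
\end{equation*}
Since $\frac{1}{\Gamma(2m-s)}\int_0^{x} v^{2m-s-1}e^{-v}\,dv$ is increasing in $x$, lies in $(0,1)$, and tends to $1$ as $x\to\infty$, the bracketed weight is uniformly comparable to $1 + \lambda_\pi^{2s}$---the finitely many small-$\lambda_\pi$ representations being absorbed into the constant term. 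Hence $\|F\|_{(s)}$ is equivalent to the $H^s(K)$-norm of $f$, which combined with Hall's unitary identification of $L^2(K)$ with $\mathcal{H}L^2(G,\nu_t)$ delivers boundedness, injectivity, and surjectivity of $C_t\colon H^s(K)\to\mathcal{H}_0^s(G,\nu_t)$. As a by-product, $\mathcal{H}_0^s(G,\nu_t)$ is seen to be independent of the auxiliary choice of $m$ with $0<s<2m$, since any admissible $m$ yields the same asymptotic spectral weight.

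The main technical hurdle is the rigorous verification of Theorem 1.1 for the new weight $w_{t,m-s/2}$: in particular the term-by-term interchange between the time integral defining $w_{t,\gamma}$ and the Hall--Plancherel series (so that the displayed identity for $\sigma_\gamma(\pi)$ is honest), and the local lower bound on $w_{t,\gamma}$ required by the theorem. Both are in the end straightforward from positivity and Fubini, but they warrant care because the Plancherel series does not converge absolutely prior to insertion of the weight. Once these are settled, the remainder reduces to the elementary asymptotics of the lower incomplete gamma function.
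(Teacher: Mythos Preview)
Your proposal is correct and follows essentially the same route as the paper: apply Hall's Theorem~1.1 with the weight $w_{t,\gamma}$ (where $\gamma=m-s/2$), compute $\sigma_\gamma(\pi)$ via Fubini and equation~(1.1), and use the incomplete--gamma asymptotics to compare the resulting spectral series with the $H^s(K)$ norm. The paper packages the same computation through the auxiliary operator $R_t^\gamma$ (Theorem~2.2 and Corollary~2.4), whereas you go directly from the definition of $\|F\|_{(s)}$ to the Fourier side; your added remark that the resulting space is independent of the choice of admissible $m$ is a useful observation not made explicit in the paper.
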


In the above setting $ C_t $ is no longer an isometry. However, on $ H^s(K) $ we can introduce an equivalent norm so that we get an isometric isomorphism. This is easy to achieve. For any $ \gamma > 0 $ we set
$$  \mu_{t,\gamma}(\pi) = \frac{1}{\Gamma(\gamma)} \int_0^t r^{\gamma-1} e^{-r \lambda_\pi^2} dr$$
and note that as $ \lambda_\pi $ goes to infinity, $ \mu_{t,\gamma}(\pi) $ behaves like $ \lambda_\pi^{-2\gamma}.$ For any distribution $ f $ on $ K $ we let
$$  \|f\|_{t,\gamma}^2 = \sum_{\pi \in \widehat{K}} d_\pi \|\pi(f)\|^2 \mu_{t, 2\gamma}(\pi)$$
and define a new norm by   $ \|f\|_{(t,s)}^2  = \|f\|_2^2 + \|(-\Delta)^mf\|_{t,m-s/2}^2.$
Then it is easily seen that $ f \in H^s(K)$ if only if $ \|f\|_{(t,s)} <\infty. $ We denote by $ H_0^s(K) $ the set $ H^s(K) $ equipped with this new norm. Then we have

\begin{thm} For any  $ s \geq 0$  the Segal-Bargmann transform $ C_t : H_0^{s}(K) \rightarrow  \mathcal{H}_0^{s}(G,\nu_t) $ is an isometric isomorphism. 
\end{thm}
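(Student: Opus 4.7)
The plan is to reduce Theorem 1.4 to a direct computation of the two summands of $\|C_tf\|_{(s)}^2$ via Theorem 1.1 (Hall's Paley--Wiener) applied twice, and then to match these with the two summands of $\|f\|_{(t,s)}^2$. Since the verification $\|C_tf\|_{(s)}^2 = \|f\|_{(t,s)}^2$ can be read in both directions, it will deliver simultaneously the isometry and the surjectivity onto $\mathcal{H}_0^s(G,\nu_t)$.

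The $L^2$-summand is immediate: Theorem 1.1 with $\nu = \nu_t$ together with (1.1) gives
$$\int_G |F(g)|^2 \nu_t(g)\, dg \,=\, \sum_{\pi \in \widehat K} d_\pi \|\pi(f)\|^2 \,=\, \|f\|_2^2,$$
matching the $\|f\|_2^2$ term on the $K$-side. For the Sobolev summand, I first observe that $(-\Delta_\C)^m F$ is the holomorphic extension to $G$ of the $L^2(K)$-function $h := (-\Delta)^m f * q_t = f * (-\Delta)^m q_t$. This is because $\Delta_\C$ restricted to $K$ coincides with $\Delta$, so the two holomorphic functions $(-\Delta_\C)^m F$ and the holomorphic extension of $h$ agree on $K$ and hence on all of $G$; term-by-term differentiation of the Fourier series of $F$ is legitimate thanks to the Gaussian decay built into $\pi(q_t) = e^{-t\lambda_\pi^2/2}\,I$. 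Applying Theorem 1.1 a second time with $\nu = w_{t, m-s/2}$ yields
$$\int_G |(-\Delta_\C)^m F(g)|^2 w_{t, m-s/2}(g)\, dg \,=\, \sum_{\pi\in\widehat K} d_\pi\,\|\pi(h)\|^2\,\sigma(\pi),$$
where $\sigma(\pi) = d_\pi^{-1}\int_G \|\pi(g^{-1})\|^2 w_{t, m-s/2}(g)\, dg$.

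The key calculation is then simply Fubini combined with (1.1):
$$\sigma(\pi) \,=\, \frac{1}{\Gamma(2m-s)} \int_0^t (t-r)^{2m-s-1}\, e^{r\lambda_\pi^2}\, dr,$$
and the substitution $u = t - r$ shows $e^{-t\lambda_\pi^2} \sigma(\pi) = \mu_{t,2m-s}(\pi)$. Since $\pi(f*q_t) = \pi(f)\pi(q_t)$, one has $\|\pi(h)\|^2 = \lambda_\pi^{4m}e^{-t\lambda_\pi^2}\|\pi(f)\|^2$, so the two factors of $e^{-t\lambda_\pi^2}$ cancel cleanly and the Sobolev side becomes $\sum_\pi d_\pi \lambda_\pi^{4m}\|\pi(f)\|^2 \mu_{t, 2m-s}(\pi) = \|(-\Delta)^m f\|_{t, m-s/2}^2$, which is exactly the remaining term on the $K$-side. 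Summing the two pieces gives $\|C_tf\|_{(s)}^2 = \|f\|_{(t,s)}^2$.

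The main obstacle is ensuring that Theorem 1.1 genuinely applies to the weight $w_{t, m-s/2}$: $K$-bi-invariance and positivity are inherited from each $\nu_r$, the local lower bound follows from the strict positivity and smoothness of the heat densities, and the finiteness of $\sigma(\pi)$ is guaranteed by $2m - s > 0$ (the integrand stays integrable at $r = t$). The secondary technical point is rigorously identifying $(-\Delta_\C)^m F$ with the holomorphic extension of $h$, but the Gaussian decay of the Fourier coefficients of $F$, coupled with uniform bounds on the analytically continued matrix coefficients on compacta of $G$, makes this a routine dominated-convergence argument.
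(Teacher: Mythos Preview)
Your argument is correct and is essentially the paper's own approach. The paper packages the key step as Theorem~2.2 (applying Hall's Theorem~1.1 to the weight $w_{t,\gamma}$ and computing $\sigma_{t,\gamma}(\pi)=e^{t\lambda_\pi^2}\mu_{t,2\gamma}(\pi)$), together with the identification $(-\Delta_\C)^m(f\ast q_t)=((-\Delta)^m f)\ast q_t$ established in Proposition~2.1 and the discussion following it; you carry out the identical computation directly on the two summands of $\|C_tf\|_{(s)}^2$.
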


We  do not know if these Sobolev spaces $ \mathcal{H}_0^{s}(G,\nu_t) $ are weighted Bergman spaces.
However we can restate the above result in the following form. Given $ s > 0 $ let us choose $ m $ such that $ m-1 < s/2 \leq  m $ for the sake of definiteness and define
$$ C_t^s f(g)  = \frac{2^m}{\Gamma(m-s/2)} \int_0^t r^{m-s/2-1} \partial_r^m(f \ast  q_{r+t})(g) dr$$
which we may call the generalised (or integrated) Segal-Bargmann transform. 
Then  we can prove that $C_t^s : H^{s}(K) \rightarrow  \mathcal{H}L^2(G,\nu_t) $ is  an  isomorphism, see Remark 2.5.

So far we have considered Segal-Bargmann transforms  on compact Lie groups but such transforms, also known as heat kernel transforms have been studied in non-compact situations also, see the works \cite{KOS},\cite{KTX},\cite{T2} and \cite{MS}. These transforms are defined in terms of heat semigroups associated to Laplacians, sublaplacians and certain elliptic partial differential operators. Therefore, it is natural to study fractional order Sobolev spaces associated to such operators. For  the sake of avoiding repetition we only study the Segal-Bargmann  transform associated to the Hermite semigroup $ T_t = e^{-tH} $ where $ H = -\Delta+|x|^2 $ is the Hermite operator. Then it is known that for any $ f \in L^2(\R^n) $ the function $ F = T_tf $ has an  entire extension to $ \C^n.$ The image of $ L^2(\R^n) $ under this Segal-Bargmann or heat kernel transform has been characterised. Let 
$$ U_t(z) =  2^n (\sinh(4t))^{-n/2} e^{-(\coth 2t)|y|^2+(\tanh 2t) |x|^2}.$$
then the following theorem has been proved in \cite{B}, see also \cite{T2}. 

\begin{thm}  An entire function $ F $ on $ \C^n $ is of the form $ T_tf(z) $ for some $ f \in L^2(\R^n) $ if and only if $ F $ belongs to the weighted Bergman space $ \mathcal{H}L^2(\C^n,U_t).$ Moreover, we have the equality of norms
$$  \int_{\C^n} |F(z)|^2 U_t(z) dz = \int_{\R^n} |f(x)|^2 dx.$$
\end{thm}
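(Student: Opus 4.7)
The natural plan is to diagonalize the heat kernel transform via the Hermite basis. Let $\{\Phi_\ap : \ap \in \Na^n\}$ denote the Hermite functions, an orthonormal basis of $L^2(\R^n)$ satisfying $H\Phi_\ap = (2|\ap|+n)\Phi_\ap$. Each $\Phi_\ap(x)$ is a polynomial in $x$ times $e^{-|x|^2/2}$, hence admits an entire extension $\Phi_\ap(z)$ to $\C^n$. For $f = \sum_\ap c_\ap \Phi_\ap \in L^2(\R^n)$, the rapid decay of the eigenvalues together with standard growth estimates on the extended Hermite functions shows that
$$ T_t f(z) = \sum_\ap c_\ap \, e^{-t(2|\ap|+n)} \, \Phi_\ap(z) $$
converges locally uniformly on $\C^n$ to an entire function.

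The core step is the weighted orthogonality identity
$$ \int_{\C^n} \Phi_\ap(z) \, \overline{\Phi_\bt(z)} \, U_t(z) \, dz = e^{2t(2|\ap|+n)} \, \delta_{\ap,\bt}. $$
Since both $\Phi_\ap$ and $U_t$ have tensor-product structure, this reduces to the one-dimensional case. Writing $z = x+iy$ and using $\overline{\Phi_k(x+iy)} = \Phi_k(x-iy)$ (the Hermite functions being real on $\R$), the integrand is a holomorphic-antiholomorphic product against a Gaussian in $(x,y)$; the integral can be evaluated by exploiting Mehler's formula as a generating function for $\sum_k r^k \Phi_k(z)\Phi_k(\bar z)$, followed by a direct Gaussian integration in $(x,y)$. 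Granted this, $\{e^{-t(2|\ap|+n)}\Phi_\ap(z)\}_\ap$ is an orthonormal system in $\mathcal{H}L^2(\C^n, U_t)$, so $T_t$ is an isometric embedding of $L^2(\R^n)$ into the weighted Bergman space.

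Surjectivity then follows from a standard adjoint argument: if $F \in \mathcal{H}L^2(\C^n, U_t)$ is orthogonal to the image of $T_t$, then its backward transform $y \mapsto \int_{\C^n} F(z) \overline{K_t(z,y)} U_t(z)\, dz$ vanishes identically on $\R^n$, where $K_t(z,y)$ denotes the holomorphically extended Mehler kernel; an injectivity argument based on the explicit Gaussian form of $K_t$ then forces $F \equiv 0$. The main obstacle is the orthogonality calculation itself: the weight $U_t(z) = 2^n(\sinh 4t)^{-n/2} e^{-(\coth 2t)|y|^2+(\tanh 2t)|x|^2}$ actually grows in $|x|$, so convergence of the Bergman integral relies on the decay built into $|\Phi_\ap(x+iy)|^2$, and the hyperbolic coefficients $\coth(2t)$ and $\tanh(2t)$ must align precisely with this decay to produce both convergence and the exact eigenvalue factor $e^{2t(2|\ap|+n)}$. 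An alternative route avoiding the direct Mehler computation is to factor $T_t$ as an explicit Gaussian multiplier composed with a scaled Bargmann transform, reducing the claim to the classical Bargmann--Fock isometry.
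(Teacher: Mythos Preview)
Your outline is a legitimate route to the result, but it differs from the paper's treatment in an essential way. The paper does not prove Theorem~1.5 from scratch; it quotes it from \cite{B} and \cite{T2}, and the method it points to (and actually carries out for the more general Theorems~1.6 and~1.7) is via the Gutzmer formula of Theorem~3.1. Concretely, one applies Gutzmer's identity to $F=T_tf$, integrates both sides against the $K$-invariant weight $p_{2t}(2y,2v)$, uses the Laguerre integral (3.1) to collapse the right-hand side to $\sum_k \|P_kf\|_2^2=\|f\|_2^2$, and then uses the $K$-invariance of $p_{2t}$ together with (3.2) to rewrite the left-hand side as $\int_{\C^n}|F|^2 U_t$. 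This packages the computation at the level of the spectral projections $P_k$ rather than individual Hermite functions, and the representation-theoretic averaging over $K$ replaces the explicit Mehler manipulations you propose.

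Your direct diagonalization via the orthogonality $\int \Phi_\ap \overline{\Phi_\bt}\, U_t = e^{2t(2|\ap|+n)}\dl_{\ap\bt}$ is closer in spirit to Byun's original argument and is perfectly sound once that identity is verified; the one-dimensional reduction through Mehler's kernel and a Gaussian integral does work, though the bookkeeping with the hyperbolic coefficients is delicate exactly where you flag it. The alternative you mention at the end---factoring $T_t$ as a Gaussian multiplier composed with a rescaled Bargmann transform---is in fact the cleanest way to get both the isometry and surjectivity simultaneously, since surjectivity then inherits from the classical Fock-space result. By contrast, your adjoint/backward-transform sketch for surjectivity is underspecified: showing that the integral transform against the analytically continued Mehler kernel is injective on $\mathcal{H}L^2(\C^n,U_t)$ is not automatic and would require either a density argument for the span of $\{\Phi_\ap(z)\}$ in the Bergman space or an appeal to the reproducing-kernel structure. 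The Gutzmer route, as executed in the paper's proof of Theorem~1.6, sidesteps this by working directly with norms rather than inner products.
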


As in the case of  compact Lie groups, we can also study the generalised heat kernel transform associated to the Hermite semigroup. We let
$$ T_t^s f(z)  = \frac{1}{\Gamma(m-s/2)} \int_0^t r^{m-s/2-1} \partial_r^m(f \ast  T_{r+t})(z) dr$$
where we have chosen $ m $ such that $ (m-1)\leq s/2 <m.$  Let $ W_H^{s,2}(\R^n) $ stand for the Hermite-Sobolev space of order $ s $ defined as the closure of the domain of the fractional power $ H^{s/2}.$ We have the following 

\begin{thm} For any $ s > 0 $, $ T_t^s :W_H^{s,2}(\R^n) \rightarrow \mathcal{H}L^2(\C^n,U_t)$  is an isomorphism. By equipping $ W_H^{s,2}(\R^n) $ with an equivalent norm, we can make $ T_t^s $ an isometry.
\end{thm}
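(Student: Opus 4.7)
The strategy mirrors the compact Lie group argument for Theorem 1.4: reduce $T_t^s$ to the unweighted heat kernel transform $T_t$ of Theorem 1.5 composed with an explicit Fourier--Hermite multiplier. Let $\{\Phi_\ap\}_\ap$ denote the Hermite basis with $H\Phi_\ap = \la_\ap \Phi_\ap$ and $\la_\ap = 2|\ap|+n$. For $f = \sum_\ap c_\ap \Phi_\ap \in W_H^{s,2}(\R^n)$, termwise differentiation in $r$ yields
\[ \pa_r^m (f \ast T_{r+t})(z) = (-1)^m \sum_\ap c_\ap \la_\ap^m e^{-(r+t)\la_\ap}\Phi_\ap(z). \]
Substituting into the definition of $T_t^s f$ and interchanging sum and integral, I would introduce the same auxiliary weight
\[ \mu_{t,\gm}(\la) = \frac{1}{\Gm(\gm)}\int_0^t r^{\gm-1}e^{-r\la}\,dr, \qquad \gm > 0, \]
used in the compact case, and obtain $T_t^s f = (-1)^m T_t G$ with $G = \sum_\ap c_\ap \la_\ap^m \mu_{t,m-s/2}(\la_\ap)\Phi_\ap$. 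Theorem 1.5 then delivers the clean identity
\[ \|T_t^s f\|_{\mathcal{H}L^2(\C^n, U_t)}^2 = \|G\|_{L^2(\R^n)}^2 = \sum_\ap |c_\ap|^2 \la_\ap^{2m}\mu_{t,m-s/2}(\la_\ap)^2. \]

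The remaining analytic input is the two-sided bound $\mu_{t,\gm}(\la) \asymp \la^{-\gm}$ uniformly for $\la \geq n$ (lower bound from the contribution of $r \in (0, 1/\la)$; upper bound from extending the integral to $(0, \infty)$). With $\gm = m - s/2 \in (0, 1]$ this gives $\la_\ap^{2m}\mu_{t,m-s/2}(\la_\ap)^2 \asymp \la_\ap^s$, and therefore $\|T_t^s f\|_{\mathcal{H}L^2(\C^n, U_t)}^2 \asymp \|f\|_{W_H^{s,2}}^2$. Boundedness and injectivity follow at once; for surjectivity, given $F = T_t g \in \mathcal{H}L^2(\C^n, U_t)$ with $g = \sum_\ap b_\ap \Phi_\ap$ via Theorem 1.5, the preimage $f := (-1)^m \sum_\ap b_\ap (\la_\ap^m \mu_{t,m-s/2}(\la_\ap))^{-1}\Phi_\ap$ lies in $W_H^{s,2}(\R^n)$ by the same asymptotic and satisfies $T_t^s f = F$. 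For the isometric refinement, equip $W_H^{s,2}(\R^n)$ with the equivalent norm $\|f\|_{(t,s)}^2 := \sum_\ap |c_\ap|^2 \la_\ap^{2m}\mu_{t,m-s/2}(\la_\ap)^2$, which makes $T_t^s$ an isometry onto $\mathcal{H}L^2(\C^n, U_t)$ by the identity above.

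The main technical obstacle I expect is justifying the termwise computation of $T_t^s f$: one must commute the analytic continuation to $\C^n$, the $m$-fold $r$-derivative, the $r$-integral and the Hermite series. I would handle this by first restricting to finite Hermite sums, which are dense in $W_H^{s,2}(\R^n)$, performing the calculation literally via the explicit Mehler kernel for $T_r$ and its known holomorphic extension to $\C^n$, and then passing to the limit using the Bergman-space estimate above to control the tails in $\mathcal{H}L^2(\C^n, U_t)$.
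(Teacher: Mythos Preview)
Your argument is correct and reaches exactly the same norm identity as the paper, but by a different route. You factor $T_t^s = (-1)^m\, T_t \circ M$, where $M$ is the Hermite multiplier with symbol $\la^m \mu_{t,m-s/2}(\la)$, and then invoke Theorem~1.5 as a black box to convert the Bergman norm of $T_t^s f$ into the $L^2$ norm of $Mf$. The paper instead applies Gutzmer's formula (Theorem~3.1) directly to $T_t^s f$: integrating the Gutzmer identity against $p_{2t}(2y,2v)$, using the key evaluation (3.1) on the right and $K$-invariance of $p_{2t}$ together with (3.2) on the left, it obtains
\[
\|T_t^s f\|_{\mathcal{H}L^2(\C^n,U_t)}^2 \;=\; \sum_{k=0}^\infty (2k+n)^{2m}\bigl(e^{t(2k+n)}a_{t,m-s/2}(k)\bigr)^2\|P_kf\|_2^2,
\]
which is your $\sum_\ap |c_\ap|^2 \la_\ap^{2m}\mu_{t,m-s/2}(\la_\ap)^2$ grouped by degree. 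Your approach is shorter and is the direct analogue of Remark~2.5 in the compact-group setting; the paper's approach keeps the Gutzmer machinery explicit, which is the template it reuses for Theorem~1.7, where one must compute a norm in the weighted space $\mathcal{H}L^2(\C^n,U_{t,m-s/2})$ and no clean reduction to Theorem~1.5 is available.
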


We also have an analogue of Theorem 1.4 for which we need to define holomorphic analogues of the Hermite-Sobolev spaces $ W_H^{s,2}.$ We  set
$$  U_{t,\gamma}(z)   = \frac{1}{\Gamma(2\gamma)} \int_0^t (t-r)^{2\gamma-1} \, U_r(z)  \,dr $$ 
and define $ \mathcal{H}_H^{s,2}(\C^n, U_t) $ as the subspace of $ \mathcal{H}L^2(\C^n,U_t) $ consisting of  those $ F $ for which
$$  \int_{\C^n} | H_\C^m F(z)|^2 U_{t,m-s/2}(z) dz < \infty.$$
In the above $ H_\C = \sum_{j=1}^n (-\frac{\partial^2}{\partial z_j^2} +z_j^2) $ is the complexification of the Hermite operator. Here is the analogue of Theorem 1.4.

\begin{thm} For any $ s \geq 0,$ the heat kernel transform $ T_t :  W_H^{s,2}(\R^n) \rightarrow \mathcal{H}_H^{s,2}(\C^n, U_t) $ is an isomorphism.

\end{thm}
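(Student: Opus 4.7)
The plan is to mirror the argument underlying Theorem 1.4 in the Hermite setting, using the Hermite orthonormal basis $ \{\Phi_\alpha\}_{\alpha \in \Na^n} $ as the spectral replacement for the Peter-Weyl decomposition. Writing $ f = \sum_\alpha c_\alpha \Phi_\alpha $, where $ H\Phi_\alpha = (2|\alpha|+n)\Phi_\alpha $, the first observation is that each $ \Phi_\alpha $ extends to an entire function on $ \C^n $ and the eigenvalue identity $ H_\C \Phi_\alpha = (2|\alpha|+n)\Phi_\alpha $ persists as an identity of holomorphic functions. Consequently the analytic continuation of the heat extension is $ T_t f(z) = \sum_\alpha c_\alpha e^{-t(2|\alpha|+n)} \Phi_\alpha(z) $, and whenever $ H^m f \in L^2(\R^n) $ the key commutation $ H_\C^m(T_t f) = T_t(H^m f) $ holds pointwise on $ \C^n $.

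First I would compute the weighted Bergman norm appearing in the definition of $ \mathcal{H}_H^{s,2}(\C^n, U_t) $. Interchanging the $ z $-integral with the defining $ r $-integral of $ U_{t,m-s/2} $ gives
$$ \int_{\C^n} |H_\C^m T_t f(z)|^2 U_{t,m-s/2}(z)\, dz = \frac{1}{\Gamma(2m-s)} \int_0^t (t-r)^{2m-s-1} \int_{\C^n} |H_\C^m T_t f(z)|^2 U_r(z)\, dz\, dr. $$
By the semigroup identity $ T_t = T_r \circ T_{t-r} $ and Theorem 1.5 applied at time $ r $ to $ T_{t-r} H^m f $, the inner integral equals $ \| T_{t-r} H^m f \|_{L^2(\R^n)}^2 $. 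Expanding in the Hermite basis and substituting $ u = t-r $ yields
$$ \int_{\C^n} |H_\C^m T_t f|^2 U_{t,m-s/2}\, dz = \sum_\alpha (2|\alpha|+n)^{2m} |c_\alpha|^2 \cdot \frac{1}{\Gamma(2m-s)} \int_0^t u^{2m-s-1} e^{-2u(2|\alpha|+n)}\, du. $$

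Next I would show the scalar factor is comparable to $ (2(2|\alpha|+n))^{-(2m-s)} $ uniformly in $ \alpha $: extending the integral to $ (0,\infty) $ yields exactly $ \Gamma(2m-s)(2(2|\alpha|+n))^{-(2m-s)} $, while the missing tail $ \int_t^\infty $ decays exponentially in the eigenvalue $ 2|\alpha|+n \geq n > 0 $. Hence the weighted norm is equivalent to $ \sum_\alpha (2|\alpha|+n)^s |c_\alpha|^2 = \|H^{s/2}f\|_{L^2(\R^n)}^2 $, giving a two-sided bound equivalent to $ \|f\|_2^2 + \|H^{s/2} f\|_2^2 $. Combined with Theorem 1.5 this produces the isomorphism: boundedness and injectivity come from one side of the bound, while for surjectivity any $ F \in \mathcal{H}_H^{s,2}(\C^n, U_t) \subset \mathcal{H}L^2(\C^n, U_t) $ equals $ T_t f $ for some $ f \in L^2(\R^n) $ by Theorem 1.5, and the norm equivalence forces $ f \in W_H^{s,2}(\R^n) $.

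The main technical obstacle will be justifying the commutation $ H_\C^m(T_t f) = T_t(H^m f) $ as entire functions when $ f $ lies only in $ W_H^{s,2}(\R^n) $, together with the subsequent Fubini-Tonelli interchanges between the spectral sum, the $ r $-integral, and the operator $ H_\C^m $. The commutation reduces to termwise differentiation of the Hermite expansion, which is legitimate because the coefficients $ c_\alpha e^{-t(2|\alpha|+n)} $ decay superexponentially in $ |\alpha| $ while the Hermite functions and their derivatives grow only polynomially in $ |\alpha| $ on compact subsets of $ \C^n $. Positivity of $ U_r $ permits Tonelli's theorem throughout, and a density argument approximating $ f $ by its Hermite partial sums reduces the general case to one where every manipulation is literally justified.
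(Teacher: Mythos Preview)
Your argument is correct and lands on exactly the same spectral identity as the paper, namely
\[
\int_{\C^n} |H_\C^m T_t f|^2\, U_{t,m-s/2}\, dz \;=\; \sum_{k\geq 0} (2k+n)^{2m}\, \|P_k f\|_2^2 \cdot \frac{1}{\Gamma(2m-s)}\int_0^t u^{2m-s-1} e^{-2u(2k+n)}\, du,
\]
after which the comparison with $(2k+n)^{-(2m-s)}$ and the conclusion are identical.

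The route, however, is genuinely different. The paper derives the inner norm by invoking Gutzmer's formula for the Hermite expansion together with the Laguerre integral identity (3.1), integrating the resulting expression against the $K$-invariant weight $(t-r)^{2m-s-1} p_{2r}(2y,2v)$, and then collapsing the $(y,v)$-integral via (3.2) to produce $U_{t,m-s/2}$. You instead treat Theorem~1.5 as a black box and apply it at each intermediate time $r$ to the function $T_{t-r}(H^m f)$, which immediately gives $\int_{\C^n} |T_t(H^m f)|^2 U_r = \|T_{t-r} H^m f\|_2^2$ without any recourse to Gutzmer or Laguerre. Since Theorem~1.5 itself is proved via Gutzmer's formula, the two arguments are equivalent in content; yours is shorter and more transparent because it does not reprove the isometry inside the computation. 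The paper's approach, on the other hand, makes visible the exact mechanism (the $\varphi_k(2iy,2iv)$--$p_{2r}$ pairing) that produces the eigenvalue factors, which is useful if one wants to vary the weight beyond the heat kernel family.
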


We remark that the case $ s = 2m $ of the above theorem has been already proved in \cite{RT}. The proofs of the above theorems depend on the so called Gutzmer's formula for the Hermite expansions established in \cite{T2}.

\section{Holomorphic Sobolev spaces on compact Lie groups}

We begin with some general considerations. Let $ \pi $ be any unitary  representation of the compact Lie group $ K $ on a Hilbert space $ \mathcal{H}.$  Given  $ \varphi \in L^2(K), f \in \mathcal{H}  $ and a left invariant vector field $ X $ consider
$$  \int_K  X\varphi(k) \pi(k)^\ast f dk = - \int_K \varphi(k) X \pi(k)^\ast f dk.$$
Since $$ X\pi(k)^\ast f = \frac{d}{dt}\Big{|}_{t=0} \pi(e^{-tX}k^{-1})f = - d\pi(X)\pi(k)^\ast f $$
it follows that 
$$ \int_K X\varphi(k) \pi(k)^\ast f dk = d\pi(X) \Big(\int_K \varphi(k)  \pi(k)^\ast f dk\Big).$$ 
Let $ \Delta  = \sum_{j=1}^n X_j^2 $ be the Laplacian on $ K.$ then we have
$$ \int_K \Delta \varphi(k) \pi(k)^\ast f dk = \pi(\Delta) \Big(\int_K \varphi(k)  \pi(k)^\ast f dk\Big)$$ 
where $ \pi(\Delta) = \sum_{j=1}^n d\pi(X_j)^2.$ We are interested in defining $ (-\pi(\Delta))^{s/2} $ for any $ s \geq 0.$ 

Apart from using spectral theory, there are other ways of defining fractional powers of Laplacians, see \cite{G}. One such useful definition is provided by solving the extension problem for the Laplacian. By assuming $ 0 < s < 1, $ let us consider the initial value  problem 
$$ \big( \Delta+\partial_\rho^2 +\frac{1-s}{\rho} \partial_\rho \big) u(k,\rho) = 0, \,\, u(k,0) = f(k)$$
where $ f \in L^2(K).$ An explicit solution of the above problem is given in terms of the heat semigroup $ e^{t\Delta}.$ Indeed, the function
$$ u(k,\rho) = \rho^s  \frac{1}{\Gamma(s)} \int_0^\infty e^{-\frac{\rho^2}{4t} }e^{t\Delta}f(k) t^{-s/2-1} dt $$
solves the extension problem, see \cite{ST}. If $ u $ is the solution given by the integral above, then it has been shown that $ \rho^{1-s} \partial_\rho u(k,\rho) $ converges to a constant multiple of $ (-\Delta)^{s/2}f $ as $ \rho $ tends to zero, see \cite{CS}. As $ e^{t\Delta}f = q_{2t}\ast f $ (remember: $ q_t $ is the kernel of $ e^{\frac{t}{2}\Delta} $) we can represent the solution as $ u(k,\rho) = \rho^s \varphi_{s,\rho}\ast f(k) $ where
$$ \varphi_{s,\rho}(k) = \frac{1}{\Gamma(s)} \int_0^\infty e^{-\frac{\rho^2}{4t} } q_{2t}(k) t^{-s/2-1} dt .$$
By direct calculation we can show that  $ \varphi_{s,\rho} \in L^1(K) $  and satisfies the equation 
\begin{equation} \big( \Delta+\partial_\rho^2 +\frac{1-s}{\rho} \partial_\rho \big) \big(\rho^s \varphi_{s,\rho}\big)(k) = 0 .\end{equation}
Moreover, using the spectral expansion of the heat kernel, it is not hard to show that
\begin{equation} \lim_{\rho \rightarrow 0} \rho^{1-s} \partial_\rho \big( \chi_\pi \ast \rho^s \varphi_{s,\rho})(k) = c_s \lambda_\pi^s  \chi_\pi(k) \end{equation}
for any $ \pi \in \widehat{K}.$  In view of (2.1) it then follows that for any $ f \in \mathcal{H} $ the $ \mathcal{H}$ valued function
$$ u(\rho) = \rho^s \int_K \varphi_{s,\rho}(k) \pi(k)^\ast f dk$$ solves  the following extension problem for the operator $ \pi(\Delta) $:
$$ \big( \pi(\Delta)+\partial_\rho^2 +\frac{1-s}{\rho} \partial_\rho \big) u(\rho) = 0, \,\,  u(0) = f .$$

We now specialise to the case where $ \mathcal{H} = \mathcal{H}L^2(G, \nu_t)$  which is the image of $ L^2(K) $ under the Segal-Bargmann transform.  This space is invariant under the left action of $ K $ on $ K_\C.$ Thus we can define a representation $ \pi $ of $ K $ on $\mathcal{H}L^2(G,\nu_t)$ by setting $ \pi(k)F(g) = F(k^{-1}g), g \in G.$ We denote the operator $ \pi(\Delta) $ by $ \Delta_\C $ and call it the complexified Laplacian. From the above discussions it follows that  a solution of the extension problem
\begin{equation}\label{ext} \big( \Delta_\C+\partial_\rho^2 +\frac{1-s}{\rho} \partial_\rho \big) U(g,\rho) = 0, \,\,  U(g,0) = F(g) \end{equation}
is given by the integral representation
\begin{equation} U(g,\rho) = \rho^s \int_K \varphi_{s,\rho}(k) F(kg) dk.\end{equation}
We now define $ D( (-\Delta_\C)^{s/2}) $ as  the  subspace of $ \mathcal{H}L^2(G,\nu_t)$ consisting of those $ F $ for which  $ \rho^{1-s} \partial_\rho U(g,\rho) $ with $ U $ defined as above has a limit in $\mathcal{H}L^2(G,\nu_t)$ as $ \rho \rightarrow 0.$ For any $ F \in  D( (-\Delta_\C)^{s/2}) $ we then define $ (-\Delta_\C)^{s/2}F $ as a (suitable) constant multiple of the above limit.

\begin{prop} Let $ 0 < s < 2$. For any $ F \in \mathcal{H}L^2(G, \nu_t),\, F = f \ast q_t$ we have the relation $ (-\Delta_\C)^{s/2}F(g) = (-\Delta)^{s/2}f \ast q_t(g) .$
\end{prop}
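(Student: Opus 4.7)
The plan is to reduce the claim to formula (2.2) on $K$ itself, via the observation that the extension $U(g,\rho)$ of $F$ on $G$ coincides with the $q_t$-convolution of the extension $u(k,\rho):=\rho^s(\varphi_{s,\rho}\ast f)(k)$ of $f$ on $K$. Concretely, the key identification to prove first is
\[
U(g,\rho) \;=\; \big(u(\cdot,\rho)\ast q_t\big)(g).
\]
Starting from the right-hand side, I would substitute the definition of $u$, interchange the order of integration (justified by $\varphi_{s,\rho}\in L^1(K)$), and perform the change of variable $m = k'^{-1}k$ in the inner integral; this collapses the double integral to $\rho^s\int_K\varphi_{s,\rho}(k')F(k'^{-1}g)\,dk'$. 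A further substitution $k'\mapsto k'^{-1}$ together with the symmetry $\varphi_{s,\rho}(k)=\varphi_{s,\rho}(k^{-1})$, inherited from $q_{2t}(k)=q_{2t}(k^{-1})$ through the defining integral of $\varphi_{s,\rho}$, then recovers $U(g,\rho)$ as written in (2.4).

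With this identification in hand, $\rho^{1-s}\partial_\rho$ commutes with the convolution in $k$, so
\[
\rho^{1-s}\partial_\rho U(\cdot,\rho) \;=\; \big(\rho^{1-s}\partial_\rho u(\cdot,\rho)\big)\ast q_t.
\]
Formula (2.2), applied character-by-character and extended to general $f$ via the Peter-Weyl decomposition, shows that $\rho^{1-s}\partial_\rho u(\cdot,\rho)\to c_s(-\Delta)^{s/2}f$ in $L^2(K)$ precisely when $f\in H^s(K)$. Since convolution with $q_t$ is an isometry of $L^2(K)$ onto $\mathcal{H}L^2(G,\nu_t)$, this $L^2(K)$-convergence transports to convergence of $\rho^{1-s}\partial_\rho U(\cdot,\rho)$ in $\mathcal{H}L^2(G,\nu_t)$ to $c_s\big((-\Delta)^{s/2}f\big)\ast q_t$. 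Dividing through by the common constant $c_s$ used to normalise $(-\Delta_\C)^{s/2}$ yields both the membership statement $F\in D((-\Delta_\C)^{s/2})\iff f\in H^s(K)$ and the claimed identity; the converse direction again uses the Segal-Bargmann isometry to pull $\mathcal{H}L^2(G,\nu_t)$-convergence back to $L^2(K)$.

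The step I expect to be the main obstacle is upgrading the spectral identity (2.2), stated only for a single character, to norm convergence in $L^2(K)$ for a general $f\in H^s(K)$. This amounts to a uniform-in-$\rho$ Fourier multiplier estimate for $\rho^{1-s}\partial_\rho\big(\rho^s\pi(\varphi_{s,\rho})\big)$ that permits dominated convergence on the Plancherel side; a natural route is to differentiate the Mellin integral defining $\varphi_{s,\rho}$ and exploit the explicit decay $\pi(q_{2t})=e^{-t\lambda_\pi^2}$. Once this quantitative step is secured, the Fubini exchanges of the first paragraph and the appeal to the Segal-Bargmann isometry in the second are routine, and the proposition follows.
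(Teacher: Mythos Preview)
Your proposal is correct and follows essentially the same route as the paper: both establish the key identity $U(g,\rho)=(u(\cdot,\rho)\ast q_t)(g)$ by a Fubini-type computation exploiting the symmetry $\varphi_{s,\rho}(k)=\varphi_{s,\rho}(k^{-1})$, and then pass to the limit using that $C_t$ is an isometry onto $\mathcal{H}L^2(G,\nu_t)$. Your discussion of the convergence step (upgrading (2.2) to $L^2(K)$ via Plancherel and dominated convergence) is in fact more detailed than the paper's, which simply asserts that the convergence holds ``under the assumption on $F$.''
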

\begin{proof} Given $ F = f \ast q_t $ consider the solution $ U(g,\rho) $ of the extension problem given in (\ref{ext}).  Since $ \varphi_{s,\rho}(k) = \varphi_{s,\rho}(k^{-1}) $ (a property inherited from the heat kernel) we see that 
$ U(k,\rho) = \rho^s \big( \varphi_{s,\rho} \ast f \big)\ast q_t(k)$ for all $ k \in K.$ But the same is true for any $ g \in G:$  indeed
$$U(g,\rho) = \rho^s \int_K \varphi_{s,\rho}(k) (f\ast q_t)(kg) dk.$$
Since
$$ f\ast q_t(k g) = \int_K f(y) q_t(y^{-1}kg) dy =  \int_K f(k y) q_t(y^{-1}g) dy.$$
In view of the relation  $ \int_K \varphi_{s,\rho}(k) f(ky) dk =  \varphi_{s,\rho} \ast f(y) $ we see that
$$U(g,\rho) = \rho^s \int_K \varphi_{s,\rho} \ast f(y) q_t(y^{-1}g) dy = \int_K u(y,\rho)  q_t(y^{-1}g) dy. $$
Thus  we have  formula
$$ (-\Delta_\C)^{s/2}F(g)  = c_s  \lim_{\rho \rightarrow 0} \int_K \rho^{1-s} \partial_\rho u(y,\rho)  q_t(y^{-1}g) dy = (-\Delta)^{s/2}f \ast q_t(g) $$
where the convergence happens in $ \mathcal{H}L^2(G,\nu_t) $ under the assumption on $ F.$ This proves the proposition.
\end{proof}

We have proved this relation  $ (-\Delta_\C)^{s/2} (f\ast q_t)(g) = (-\Delta)^{s/2}f \ast q_t(g) $  for all $ 0 \leq s \leq 2.$ We claim that the same holds for  $ s = 2 $ as well. Recall that we have 
$$ \Delta\pi(k)^\ast F(g) = \frac{d}{dt}\Big{|}_{t=0} \pi(e^{-tX}k^{-1})F(g) = - \pi(\Delta)\pi(k)^\ast F(g) $$
where $ \Delta $ is applied in the $ k $ variable and $ \pi(\Delta) $ in the $ g $ variable.
A simple calculation shows that 
$  \pi(k)^\ast F(g) = \pi(k^{-1})f \ast q_t(g) $ and hence with $ R_y $ standing for right translation
$$ \Delta\pi(k)^\ast F(g) = \int_K  \Delta (R_yf(k)) q_t(y^{-1}g) dy = \int_K  (\Delta f)(ky) q_t(y^{-1}g) dy$$
where we have used the bi-invariance of $ \Delta.$ This proves that $ \pi(\Delta)(\pi(k)^\ast F)(g) = (\Delta f)\ast q_t( kg).$ Evaluating at the identity we get $ \Delta_\C (f\ast q_t) = (\Delta f)\ast q_t.$

The above has some interesting consequences. Observe that $ f\ast q_t(g) $ is an eigenfunction of $ \Delta_\C $ whenever $ f $ is an eigenfunction of $ \Delta.$ In particular, for any $ f \in L^2(K) $ and $ \pi \in \widehat{K} $ the holomorphic function $ f\ast \chi_\pi(g) $ is an eigenfunction of $ \Delta_\C $ with eigenvalue $ -\lambda_\pi^2.$ The spectral decomposition of $ \Delta_\C $ is given by
$$  -\Delta_\C F(g) = \sum_{\pi \in \widehat{K}}   d_\pi \,  \lambda_\pi^2  e^{-\frac12 \lambda_\pi^2}  f\ast \chi_\pi(g)$$
whenever $ F = f \ast q_t.$  In view of this we can define the fractional powers $ (-\Delta_\C)^{s/2} $ by spectral theorem as
$$ (-\Delta_\C)^{s/2} F(g)  = \sum_{\pi \in \widehat{K}}   d_\pi \,  \lambda_\pi^s  e^{-\frac12 \lambda_\pi^2}  f\ast \chi_\pi(g)  =  ((-\Delta)^{s/2} f)\ast q_t(g).$$
Therefore, for any  $ s \geq 0  $ we can now define $ \mathcal{H}^s(G,\nu_t) $ as the subspace of $ \mathcal{H}L^2(G,\nu_t) $ consisting of $ F $ for which $ (-\Delta_\C)^{s/2} \in \mathcal{H}L^2(G,\nu_t) .$ In view of the above  it follows that $ C_t : H^s(K) \rightarrow \mathcal{H}^s(G,\nu_t) $ is an isometric isomorphism.

Thus we have defined the holomorphic Sobolev spaces $\mathcal{H}^s(G,\nu_t) $ for  for any $ s\geq 0 $ as the image of $ H^s(K)$ under $ C_t.$  However,  we would like to come up with an intrinsic definition which does not use any information on the function $ f.$  Recall that when $ s = 2m $ the definition does not involve $ f $ explicitly as it only requires that $ (-\Delta_\C)^m F $ belongs to $ L^2(G,\nu_t).$ We are interested in coming up with such a definition for fractional order Sobolev spaces.

In order to motivate what we plan to do, observe that for any $ s, t \geq 0 $ we have the semigroup property 
$ (-\Delta_\C)^{s/2}(-\Delta_\C)^{t/2} = (-\Delta_\C)^{(s+t)/2}.$  Assuming that we can define negative powers of $ -\Delta_\C $ we expect the relation $ (-\Delta_\C)^{s/2} = (-\Delta_\C) (-\Delta_\C)^{-1+s/2}.$ This suggests that we need to look at $ (-\Delta_\C)^{-\gamma} $ and hence $ (-\Delta)^{-\gamma} $ for $ \gamma > 0 .$ Recall that $ (-\Delta)^{-\gamma} $ is defined by the gamma integral
$$ (-\Delta)^{-\gamma} f = \frac{1}{\Gamma(\gamma)} \int_0^\infty r^{\gamma-1} \, e^{r\Delta}f  \,dr.$$ 
Since constants are annihilated by $ \Delta $ they do not belong to the domain of $ (-\Delta)^{-\gamma}.$ Instead of excluding constants we consider the truncated Gamma integral
$$ R_t^\gamma f = \frac{1}{\Gamma(\gamma)} \int_0^t r^{\gamma-1} \, e^{r\Delta}f  \,dr.$$ 
Observe that $ R_t^\gamma f $ is defined fo any $ f \in L^2(K) $ and as $ t $ tends to $\infty$  it converges to $ (-\Delta)^{-\gamma}f $ for all $ f $ with integral zero.

We would like to replace $ (-\Delta_\C)^{s/2}F $ in the definition of $ \mathcal{H}^s(G,\nu_t) $ by $ (-\Delta_\C) R_t^{1-s/2} F$ where  we set $ R_t^\gamma F =  (R_t^\gamma f )\ast q_t.$ Observe that $ (-\Delta_\C) R_t^{1-s/2} F=    R_t^{1-s/2}  (-\Delta_\C) F.$ We will now show that a given function $ F \in \mathcal{H}L^2(G,\nu_t) $ belongs to the  weighted Bergman space $ \mathcal{H}L^2(G, w_{t,1-s/2})$ if and only if $ R_t^{1-s/2}F \in \mathcal{H}L^2(G,\nu_t).$ Once this is done, we will redefine $ \mathcal{H}_0^s(G,\nu_t) $ as the space of $ F \in \mathcal{H}L^2(G,\nu_t) $ such that $ \Delta_\C F \in \mathcal{H}L^2(G, w_{t,1-s/2}).$
Of course we need to check that both definitions coincide and this new definition can be extended for all $ s >0.$
We begin with our first claim. For $ \gamma > 0 $ we define the weight function
$$  w_{t,\gamma}(g)  = \frac{1}{\Gamma(2\gamma)} \int_0^t r^{2\gamma-1} \, \nu_{t-r}(g) \,dr = \frac{1}{\Gamma(2\gamma)} \int_0^t (t-r)^{2\gamma-1} \, \nu_r(g) \,dr $$ 
which is just the Riemann-Liouville  fractional integral of $ \nu_t.$

\begin{thm}  Let $ \gamma > 0 .$  For any  $ F \in  \mathcal{H}L^2(G,\nu_t), \,  R_t^\gamma F \in \mathcal{H}_0^s(G,\nu_t) $ if and only if $   F \in \mathcal{H}L^2(G, w_{t,\gamma}).$
\end{thm}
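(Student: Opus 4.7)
The plan is to use Hall's theorem (Theorem~1.1) to translate both conditions into convergence of explicit Fourier series on $\widehat{K}$, and then to verify that the two resulting weight sequences are uniformly comparable.

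Since $F \in \mathcal{H}L^2(G,\nu_t)$, the Segal-Bargmann isomorphism produces a unique $f \in L^2(K)$ with $F = f \ast q_t$. Using the identity $\Delta_\C(h \ast q_t) = (\Delta h) \ast q_t$ derived right after Proposition~2.1, convolution by $q_t$ intertwines the two heat semigroups, so the definition $R_t^\gamma F = (R_t^\gamma f) \ast q_t$ agrees with the integral formula for $R_t^\gamma F$ applied directly to $F$. Taking Fourier transforms gives $\pi(R_t^\gamma f) = \mu_{t,\gamma}(\pi)\pi(f)$, and the Segal-Bargmann isometry then yields
$$\|R_t^\gamma F\|_{\nu_t}^2 = \|R_t^\gamma f\|_{L^2(K)}^2 = \sum_{\pi\in\widehat{K}} d_\pi\,\mu_{t,\gamma}(\pi)^2\,\|\pi(f)\|^2.$$
For the other side, I would compute the Hall density $\sigma_{w_{t,\gamma}}(\pi) = d_\pi^{-1}\int_G \|\pi(g^{-1})\|^2 w_{t,\gamma}(g)\,dg$ by substituting the definition of $w_{t,\gamma}$, swapping integrals via Tonelli, and using the identity $d_\pi^{-1}\int_G \|\pi(g^{-1})\|^2 \nu_r(g)\,dg = e^{r\lambda_\pi^2}$ (equation~(1.1) with $t$ replaced by $r$). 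After the substitution $u = t-r$ this produces $\sigma_{w_{t,\gamma}}(\pi) = e^{t\lambda_\pi^2}\mu_{t,2\gamma}(\pi)$, and since $\pi(F|_K) = e^{-\frac{t}{2}\lambda_\pi^2}\pi(f)$, Hall's theorem gives
$$\|F\|_{w_{t,\gamma}}^2 = \sum_{\pi\in\widehat{K}} d_\pi\,\|\pi(f)\|^2\,\mu_{t,2\gamma}(\pi).$$

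The proof is completed by showing that $\mu_{t,\gamma}(\pi)^2$ and $\mu_{t,2\gamma}(\pi)$ are comparable uniformly in $\pi \in \widehat{K}$, so that the two series are simultaneously finite. Both quantities are positive constants when $\lambda_\pi = 0$, and the substitution $u = r\lambda_\pi^2$ in the defining gamma integral shows $\mu_{t,\gamma}(\pi) = \lambda_\pi^{-2\gamma}(1 + o(1))$ as $\lambda_\pi \to \infty$, so the ratio $\mu_{t,\gamma}(\pi)^2 / \mu_{t,2\gamma}(\pi) \to 1$; a continuity argument on the discrete eigenvalue set then provides uniform two-sided bounds. The main technical obstacles are verifying the hypotheses of Hall's theorem for $w_{t,\gamma}$ ($K$-bi-invariance and finiteness of $\sigma_{w_{t,\gamma}}(\pi)$ are immediate from the formula, but local boundedness away from zero requires a short check using strict positivity of the heat kernels $\nu_r$) and making the comparability uniform, with a separate treatment of the trivial representation. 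Neither is deep; the real content lies in packaging these observations through Hall's correspondence.
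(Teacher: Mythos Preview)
Your proposal is correct and follows essentially the same route as the paper: both arguments invoke Hall's Theorem~1.1, compute the density $\sigma_{w_{t,\gamma}}(\pi)=e^{t\lambda_\pi^2}\mu_{t,2\gamma}(\pi)$ by integrating (1.1) against the fractional weight, and then finish by showing that $\mu_{t,\gamma}(\pi)^2$ and $\mu_{t,2\gamma}(\pi)$ are both comparable to $\lambda_\pi^{-4\gamma}$. Your write-up is in fact slightly more scrupulous than the paper's, in that you flag the need to check local positivity of $w_{t,\gamma}$ and to handle the trivial representation separately, points the paper leaves implicit.
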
 
\begin{proof} We prove this theorem by an application of Theorem 1.1. So we need to compute 
$$ \sigma_{t,\gamma} (\pi) =  d_\pi^{-1} \int_G \| \pi(g^{-1})\|^2 w_{t,\gamma}(g) dg = \frac{1}{\Gamma(2\gamma)} \int_0^t  (t-r)^{2\gamma-1} \sigma_r(\pi)  dr$$
where $ \sigma_r(\pi) $ has been already calculated:
$$ \sigma_r(\pi) = d_\pi^{-1} \int_G \|\pi(g^{-1})\|^2 \nu_r(g) =  e^{r\lambda_\pi^2}.$$
Thus $ \sigma_{t,\gamma}(\pi) $ is explicitly given by 
$$ \sigma_{t,\gamma}(\pi) = \frac{1}{\Gamma(2\gamma)}\int_0^t (t-r)^{2\gamma -1} e^{r \lambda_\pi^2} dr = e^{t\lambda_\pi^2} \frac{1}{\Gamma(2\gamma)}\int_0^t r^{2\gamma-1} e^{-r\lambda_\pi^2} dr.$$
Therefore, in view of Theorem 1.1 we have the equality
\begin{equation}  \int_G  | F(g)|^2 w_{t,\gamma}(g)  dg = \sum_{\pi \in \widehat{K}} d_\pi  \| \pi( f)\|^2  e^{-t\lambda_\pi^2} \sigma_{t,\gamma}(\pi) .\end{equation}
In view of the easily verifiable estimate
$$  C \lambda_\pi^{-4\gamma} \leq  \frac{1}{\Gamma(2\gamma)} \int_0^t  r^{2\gamma-1} e^{-r \lambda_\pi^2} dr \leq \lambda_\pi^{-4\gamma} $$ 
we conclude that the left hand side of (2.4) is finite if and only if 
$$ \sum_{\pi \in \widehat{K}} d_\pi  \| \pi(f)\|^2  \lambda_\pi^{-4\gamma} < \infty. $$
And this happens precisely when $ R_t^\gamma F $ belongs to $ \mathcal{H}L^2(G,\nu_t).$ Indeed, $ R_t^\gamma F(g) = (R_t^\gamma f)\ast q_t(g) $ and 
hence we need to check if $ R_t^\gamma f \in L^2(K).$ But 
$$ \pi(R_t^\gamma f) = \pi(f) \frac{1}{\Gamma(\gamma)} \int_0^t r^{\gamma-1} \, e^{-r\lambda_\pi^2}   \,dr.$$  As the integral behaves like $ \lambda_\pi^{-2\gamma} $ our claim is proved. Hence the theorem.
\end{proof}

\begin{cor}  For any $ 0 < s < 2 $ a function  $ F \in  \mathcal{H}L^2(G,\nu_t) $ belongs to $ \mathcal{H}_0^s(G,\nu_t) $ if and only if $  \Delta_\C F \in \mathcal{H}L^2(G, w_{t,1-s/2}).$
\end{cor}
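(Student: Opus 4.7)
The plan is to reduce the corollary to a direct spectral computation modeled on the proof of Theorem 2.2, with $\Delta_\C F$ playing the role of the holomorphic function whose weighted Bergman norm is computed. Since the discussion preceding the corollary has already identified $\mathcal{H}_0^s(G,\nu_t)$ with the image $C_t(H^s(K))$, it suffices to show that, for $F = f \ast q_t$ with $f \in L^2(K)$, the membership $\Delta_\C F \in \mathcal{H}L^2(G, w_{t,1-s/2})$ is equivalent to $f \in H^s(K)$.

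First I would apply Hall's Theorem 1.1 to $\Delta_\C F$ with the density $w_{t,1-s/2}$. Using the intertwining identity $\Delta_\C(f \ast q_t) = (\Delta f) \ast q_t$ established after Proposition 2.1, this mirrors the computation leading to (2.4) in the proof of Theorem 2.2 and gives
\[
\int_G |\Delta_\C F(g)|^2\, w_{t,1-s/2}(g)\, dg = \sum_{\pi \in \widehat{K}} d_\pi\, \|\pi(\Delta f)\|^2\, e^{-t\lambda_\pi^2}\, \sigma_{t,1-s/2}(\pi).
\]
Substituting $\|\pi(\Delta f)\|^2 = \lambda_\pi^4 \|\pi(f)\|^2$ together with the closed form $\sigma_{t,1-s/2}(\pi) = e^{t\lambda_\pi^2}\, \mu_{t,2-s}(\pi)$ derived in the proof of Theorem 2.2, the right-hand side collapses to $\sum_{\pi \in \widehat{K}} d_\pi\, \lambda_\pi^4\, \|\pi(f)\|^2\, \mu_{t,2-s}(\pi)$.

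Next I would invoke the two-sided bound $\mu_{t,2-s}(\pi) \asymp \lambda_\pi^{-(4-2s)}$ (for $\lambda_\pi$ bounded away from zero, with the finitely many small-$\lambda_\pi$ terms harmless) to translate the finiteness of this series into finiteness of $\sum_{\pi} d_\pi\, \lambda_\pi^{2s}\, \|\pi(f)\|^2$. By Plancherel this is precisely the Sobolev condition $(-\Delta)^{s/2} f \in L^2(K)$, and by the spectral relation $(-\Delta_\C)^{s/2}(f \ast q_t) = ((-\Delta)^{s/2} f) \ast q_t$ recorded just above the corollary, it is equivalent to $F \in \mathcal{H}_0^s(G, \nu_t)$.

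The main technical point I expect to have to dispatch is that, when $f$ lies only in $L^2(K)$, the distributional $\Delta f$ is not in $L^2(K)$, so writing $\pi(\Delta f) = -\lambda_\pi^2\, \pi(f)$ and applying Hall's theorem to $(\Delta f) \ast q_t$ is formal. I would justify this by first carrying out the equality on the dense subspace of trigonometric polynomials in $f$ (finite Peter--Weyl truncations, for which everything is classical and $\Delta f$ is genuinely in $L^2(K)$), and then extending to arbitrary $F \in \mathcal{H}L^2(G,\nu_t)$ by monotone convergence in the nonnegative spectral series on the right-hand side.
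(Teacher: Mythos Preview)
Your proposal is correct and follows essentially the same route as the paper: the corollary is deduced from Theorem~2.2 applied to $\Delta_\C F$ with $\gamma = 1-s/2$, and Theorem~2.2 itself is proved by the very Hall--theorem computation you carry out. You simply unroll that computation directly for $\Delta_\C F$ instead of citing the theorem, and your handling of the fact that $\Delta f$ need not lie in $L^2(K)$ (truncate via Peter--Weyl and pass to the limit by monotone convergence) is more explicit than anything the paper says.
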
 

We are ready to define $ \mathcal{H}_0^s(G,\nu_t) $ for any $ s > 0.$ Fix any integer $ m $ such that $ 0 <s < 2m.$  We say that a function $ F $ from $ \mathcal{H}L^2(G,\nu_t)$ belongs to $
\mathcal{H}_0^s(G,\nu_t) $ if and only if $ \Delta_\C^m F \in \mathcal{H}L^2(G, w_{t,m-s/2}).$ We remark that as $ s \rightarrow 2m $ the weight function $ w_{t,m-s/2} $ converges to $ \nu_t $ and hence the new definitions coincides with the original definition given in \cite{HL}. From the above proof, we can also restate the definition as follows: $ F \in \mathcal{H}_0^s(G,\nu_t) $ if and only if $ (-\Delta_\C)^m R_t^{m-s/2}F \in \mathcal{H}L^2(G,\nu_t).$ Since $ \Delta_\C^m F $ makes sense for any $ F \in \mathcal{H}L^2(G,\nu_t) $ it is the integrability  condition, namely that  
$ \Delta_\C^m F $ is square integrable with respect to the measure $ w_{t,m-s/2}(g) dg $ that determines whether $ F \in \mathcal{H}_0^s(G,\nu_t) $ or not.

\begin{cor} For any $ s \geq 0 $ the Segal-Bargmann  transform $ C_t: H^s(K) \rightarrow \mathcal{H}_0^s(G,\nu_t) $ is an isomorphism.
\end{cor}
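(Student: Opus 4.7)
The plan is to reduce Corollary 2.4 to a Plancherel computation on $K$. Hall's theorem already gives that $C_t: L^2(K) \to \mathcal{H}L^2(G,\nu_t)$ is an isometric isomorphism, so for $F = f \ast q_t$ with $f \in L^2(K)$ I need only characterise the extra condition that places $F$ in $\mathcal{H}_0^s(G,\nu_t)$. Fix an integer $m$ with $0 < s < 2m$ and write $\gamma = m - s/2 > 0$. By definition $F \in \mathcal{H}_0^s(G,\nu_t)$ iff $(-\Delta_\C)^m F \in \mathcal{H}L^2(G, w_{t,\gamma})$, and Theorem 2.2 applied with $(-\Delta_\C)^m F$ in place of $F$ converts this into the equivalent requirement $R_t^{\gamma} (-\Delta_\C)^m F \in \mathcal{H}L^2(G, \nu_t)$.

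Next I would push the condition back down to $K$. The identity $\Delta_\C(f \ast q_t) = (\Delta f) \ast q_t$ established in the paragraph following Proposition 2.1 iterates to give $(-\Delta_\C)^m F = ((-\Delta)^m f) \ast q_t$. Combining this with the definition $R_t^\gamma F = (R_t^\gamma f) \ast q_t$ and the commutativity of $(-\Delta)^m$ and $R_t^\gamma$ as spectral multipliers on $K$, I obtain
\[
R_t^\gamma (-\Delta_\C)^m F = \bigl((-\Delta)^m R_t^\gamma f\bigr) \ast q_t,
\]
so by Hall's isometry this lies in $\mathcal{H}L^2(G,\nu_t)$ precisely when $(-\Delta)^m R_t^\gamma f \in L^2(K)$, with equal norms. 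Plancherel on $K$, with $\mu_{t,\gamma}(\pi) = \Gamma(\gamma)^{-1}\int_0^t r^{\gamma-1} e^{-r\lambda_\pi^2} dr$, turns this into
\[
\|(-\Delta)^m R_t^\gamma f\|_{L^2(K)}^2 \;=\; \sum_{\pi \in \widehat K} d_\pi \, \lambda_\pi^{4m} \, |\mu_{t,\gamma}(\pi)|^2 \, \|\pi(f)\|^2.
\]

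Finally I would use the two-sided estimate $\mu_{t,\gamma}(\pi) \asymp \lambda_\pi^{-2\gamma}$ for $\lambda_\pi$ bounded away from zero (the same bound used inside the proof of Theorem 2.2), which gives $\lambda_\pi^{4m}|\mu_{t,\gamma}(\pi)|^2 \asymp \lambda_\pi^{2s}$. The displayed sum is therefore comparable to $\sum_\pi d_\pi \lambda_\pi^{2s} \|\pi(f)\|^2$, and when combined with the $\|F\|_{L^2(G,\nu_t)}^2 = \|f\|_{L^2(K)}^2$ term already provided by Hall, the norm $\|F\|_{(s)}$ becomes comparable to the $H^s(K)$ norm of $f$. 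This makes $C_t : H^s(K) \to \mathcal{H}_0^s(G,\nu_t)$ a bijective bicontinuous linear map, i.e.\ an isomorphism.

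The main obstacle is not conceptual but rather a careful justification of the spectral manipulations: one must verify that $(-\Delta_\C)^m$ and $R_t^\gamma$ commute with each other and with convolution by $q_t$ on the holomorphic side. These follow by iterating the $m=1$ case already established in the excerpt and by differentiating under the gamma integral defining $R_t^\gamma$ via dominated convergence, using the rapid spectral decay of $\pi(q_t) = e^{-\frac12 t\lambda_\pi^2}$. The trivial representation (where $\lambda_\pi = 0$) needs separate attention, but there $\mu_{t,\gamma}(\pi) = t^\gamma/\Gamma(\gamma+1)$ is a positive constant, and the corresponding term is harmlessly absorbed into the $L^2(K)$ part of the $H^s$ norm.
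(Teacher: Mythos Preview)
Your proposal is correct and follows essentially the same route as the paper. The paper does not spell out a separate proof of Corollary 2.4; immediately before it, the paper observes that the definition of $\mathcal{H}_0^s(G,\nu_t)$ can be restated as $(-\Delta_\C)^m R_t^{m-s/2}F \in \mathcal{H}L^2(G,\nu_t)$, which together with the intertwining relation $(-\Delta_\C)^m(f\ast q_t)=((-\Delta)^m f)\ast q_t$ and the Plancherel estimate $\mu_{t,\gamma}(\pi)\asymp \lambda_\pi^{-2\gamma}$ is exactly your argument. One small point: when you invoke Theorem 2.2 with $(-\Delta_\C)^m F$ in place of $F$, that function need not lie in $\mathcal{H}L^2(G,\nu_t)$, so strictly speaking the hypothesis of Theorem 2.2 is not met; the paper sidesteps this by applying $R_t^{m-s/2}$ to $F$ first and then $(-\Delta_\C)^m$, or equivalently by quoting directly the identity (2.5) from the proof of Theorem 2.2, which is what your Plancherel computation really uses.
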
 

\begin{rem}  Consider the operators $ (-\Delta_\C)^m R_t^{m-s/2} $ acting on  $ \mathcal{H}L^2(G,\nu_t).$  By definition, if $  F = f \ast q_t $ we have  $ (-\Delta_\C)^m R_t^{m-s/2}F = ((-\Delta)^{m}R_t^{m-s/2}f) \ast q_t.$  Thus
$$ \Delta_\C^m R_t^{m-s/2}(f \ast q_t)(g) = \frac{1}{\Gamma(m-s/2)} \int_0^t r^{m-s/2-1} (\Delta^m f \ast q_{r+t})(g) dr. $$
Using the fact that $ \frac12 \Delta(f\ast q_t) =\partial_t (f\ast q_t) $ we can rewrite the above as
$$ \Delta_\C^m R_t^{m-s/2}F(g) = \frac{2^m}{\Gamma(m-s/2)} \int_0^t  r^{m-s/2-1}  \partial_r^m(f \ast q_{t+r})(g)  dr.$$
This suggests that we consider the map $ C_t^sf(g) = f \ast q_t^s(g)$ where 
$$ q_t^s(k) =\frac{2^m}{\Gamma(m-s/2)} \int_0^t  r^{m-s/2-1} \partial_r^m q_{r}(k)  dr  $$
and study its mapping properties. In view of Theorem 1.1, a  simple calculation shows that 
$$ \int_{G} |C_t^sf(g) |^2 \nu_t(g) dg = \sum_{\pi \in \widehat{K}} d_\pi \|\pi(f)\|^2  \lambda_\pi^{4m} (\mu_{t, m-s/2}(\pi))^2  $$
where $ \Gamma(\gamma) \mu_{t,\gamma}(\pi) =  \int_0^t r^{\gamma-1} e^{-\frac12 r \lambda_\pi^2} dr$ which behaves like $ \lambda_\pi^{-2\gamma}.$ Consequently, we see that 
$$ C_1  \|f \|_{(s)}^2  \leq \int_{G} |C_t^sf(g) |^2 \nu_t(g) dg  \leq C_2 \|f \|_{(s)}^2.$$
Hence the generalised Segal-Bargmann transform $ C_t^s: H^s(K) \rightarrow \mathcal{H}L^2(G,\nu_t) $ is an isomorphism.
\end{rem} 

\section{ Holomorphic Sobolev spaces for the Hermite operator}
As we mentioned in the introduction, the proofs of Theorems 1.6 and 1.7 depend on Gutzmer's formula  which we recall now. Consider a family of unitary operators acting on $ L^2(\R^n)$ which are explicitly given by
$$ \pi(x,u)\varphi(\xi) = e^{i (x\cdot \xi +\frac12 x\cdot u)} \varphi(\xi+y),\,\, \varphi \in L^2(\R^n) $$ indexed by $ (x,u) \in \R^{2n}.$ These are related to the Schr\"odinger representation $ \pi_! $ of the Heisenberg group $ \mathbb{H}^n.$ Observe that $ \pi(z,w)F(\zeta) $ can be defined similarly for $ (z,w) \in \C^{2n} $ when $ F$ is holomorphic. However, $\pi(z,w)F(\xi) $ need not be in $ L^2(\R^n) $ unless further assumptions are made on $ F.$ Assuming that $ F $ is such a function, we are interested in finding a formula for $ \|\pi(z,w)F(\cdot)\|_{L^2(\R^n)}.$
In order to state Gutzmer's formula we need to introduce some more notation. Let $Sp(n,\R$) stand for the symplectic group consisting of $2n \times 2n$ real matrices
that preserve the symplectic form $[(x, u), (y, v)] = (u \cdot y- v \cdot x) $on $\R^{2n}$
and have determinant one. Let $O(2n,\R) $ be the orthogonal group and we define
$K = Sp(n,\R) \cap O(2n,\R).$ Then there is a one to one correspondence between $K$ and
the unitary group $U(n) .$  The action of  $ \sigma  = a + ib $ on $ \R^{2n} $ is given by $ \sigma(x,u) = ( ax-bu, bx+au) $ and this has a natural extension to $ \C^{2n}.$
We have the following  result which is known as Gutzmer's formula for the Hermite expansions, see \cite{T2}.

\begin{thm} Let $ F $ be a holomorphic function on $ \C^n $ whose restriction to $ \R^n $ is denoted by $ f. $  Then for any $ z=x+iy, w=u+iv $ in $ \C^n $ we have
$$ \int_{\R^n} \Big( \int_K |\pi(\sigma(z,w))F(\xi)|^2 d\sigma \Big) d\xi = e^{(u\cdot y-v \cdot x)} \sum_{k=0}^\infty \frac{k! (n-1)!}{(k+n-1)!} \varphi_k(2iy,2iv) \|P_kf\|_2^2 .$$

\end{thm}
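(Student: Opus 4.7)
The strategy is to reduce the identity to a representation-theoretic computation on the Hermite eigenspaces and then track the analytic continuation to complex arguments. The two key inputs are the Hermite expansion $f=\sum_k P_kf$ and the fact that $K\cong U(n)$ acts irreducibly on each eigenspace $\mathcal{H}_k$ (of dimension $\binom{k+n-1}{n-1}$) via the restriction of the metaplectic representation to $K\subset Sp(n,\R)$. Because $\pi(z,w)F(\xi)=e^{i(z\cdot\xi+\frac{1}{2}z\cdot w)}F(\xi+w)$ is holomorphic in $(z,w)\in\C^{2n}$ whenever $F$ is, the integrand is real-analytic in $(y,v)$, so it suffices to verify the formula for real $(x,u,y,v)$ and then analytically continue. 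As a consistency check, specialising to $y=v=0$ gives $\varphi_k(0,0)=\binom{k+n-1}{n-1}$, so the right-hand side collapses to $\sum_k\|P_kf\|_2^2=\|f\|_2^2$, matching $\|\pi(x,u)F\|_2^2=\|f\|_2^2$ by unitarity.

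The first step is to expand $f=\sum_k\sum_{|\alpha|=k}c_\alpha \Phi_\alpha$ in an orthonormal Hermite basis and note that the matrix coefficients $\Phi_{\alpha\beta}(x,u):=\langle\pi(x,u)\Phi_\alpha,\Phi_\beta\rangle$ are the special Hermite functions; these extend holomorphically to $\Phi_{\alpha\beta}(z,w)$ on $\C^{2n}$. Applying Parseval in the variable $\xi$ reduces the inner integral $\int_{\R^n}|\pi(\sigma(z,w))F(\xi)|^2\,d\xi$ to a double sum over Hermite coefficients of products $\Phi_{\alpha\gamma}(\sigma(z,w))\overline{\Phi_{\beta\gamma}(\sigma(z,w))}$. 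Averaging over $\sigma\in K$ and invoking Schur orthogonality, together with irreducibility of $\mathcal{H}_k$ under the metaplectic $U(n)$-action, forces $|\alpha|=|\beta|=k$ and collapses the diagonal terms to a scalar multiple $\delta_{\alpha\beta}/\dim\mathcal{H}_k=\frac{k!(n-1)!}{(k+n-1)!}\delta_{\alpha\beta}$ of the $K$-invariant character-like quantity $\sum_{|\mu|=k}\int_K|\Phi_{\mu\beta}(\sigma(z,w))|^2\,d\sigma$. This explains the combinatorial factor in the statement.

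The second step is to identify this surviving $K$-invariant scalar with $\varphi_k(2iy,2iv)$ and extract the exponential prefactor. For real $(x,u)$ the classical trace identity $\sum_{|\alpha|=k}\Phi_{\alpha\alpha}(x,u)=\varphi_k(x,u)$ delivers $\varphi_k$ directly; for complex $(z,w)=(x+iy,u+iv)$, the non-unitary modulus $|e^{i(z\cdot\xi+\frac{1}{2}z\cdot w)}|^2=e^{-2y\cdot\xi-(y\cdot u+x\cdot v)}$ produces an extra weight, and the pairing of $F(\xi+w)$ with $\overline{F}(\xi+\bar w)$ forces the Laguerre argument to be shifted by $w-\bar w=2iv$ (and dually by $2iy$ in the momentum variable). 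The bilinear expression $u\cdot y-v\cdot x=[(x,u),(y,v)]$ is $K$-invariant because $K\subset Sp(n,\R)$ preserves the symplectic form, so this factor exits the $\sigma$-integral as the prefactor $e^{u\cdot y-v\cdot x}$ while the remaining rotation-invariant piece is precisely $\varphi_k(2iy,2iv)$ summed against the Hermite norms $\|P_kf\|_2^2$.

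The main obstacle is the second step: rigorously identifying the $K$-averaged sum of special Hermite functions with $\varphi_k(2iy,2iv)$ times the stated exponential. This amounts to a Mehler-type identity on $\C^{2n}$ and is subtle because it must be carried out after holomorphic extension, with careful bookkeeping of which variables are conjugated. My plan is to first derive the identity on the real slice $y=v=0$ from the representation-theoretic collapse (where both sides reduce to $\|f\|_2^2$), then extend in $(y,v)$ by analytic continuation, justifying term-by-term differentiation via the rapid decay of the Laguerre functions and the uniform convergence of the Hermite series of $F$ on compact subsets of $\C^n$.
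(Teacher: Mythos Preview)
The paper does not prove this theorem; it is quoted from \cite{T2} and used as a black box in Section~3. Your strategy---Hermite expansion, special Hermite functions $\Phi_{\alpha\beta}(z,w)=\langle\pi(z,w)\Phi_\alpha,\Phi_\beta\rangle$, irreducibility of the metaplectic $U(n)$-action on each $\mathcal{H}_k$, and Schur's lemma to produce the factor $k!(n-1)!/(k+n-1)!$---is indeed the approach taken in \cite{T2}, and Step~1 is essentially correct.

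There is, however, a genuine gap in Step~2. You propose to check the identity on the slice $y=v=0$ (where both sides reduce to $\|f\|_2^2$ by unitarity) and then ``extend in $(y,v)$ by analytic continuation''. This does not work: the left-hand side involves $|\cdot|^2$ and is only \emph{real}-analytic in $(x,u,y,v)\in\R^{4n}$, and two real-analytic functions agreeing on a $2n$-dimensional linear subspace need not agree on $\R^{4n}$. Agreement at $y=v=0$ gives you essentially no information about the $(y,v)$-dependence. Your earlier sentence ``it suffices to verify the formula for real $(x,u,y,v)$ and then analytically continue'' is also confused, since $(x,u,y,v)$ are real parameters to begin with.

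The standard repair is polarisation. Define, for $(z,w),(z',w')\in\C^{2n}$,
\[
G\big((z,w),(z',w')\big)=\int_K\int_{\R^n}\pi(\sigma(z,w))F(\xi)\,\overline{\pi(\sigma(\bar z',\bar w'))F(\xi)}\,d\xi\,d\sigma,
\]
which is holomorphic in all $4n$ complex variables. For \emph{real} $(z,w)=(x,u)$ and $(z',w')=(x',u')$ your Schur argument and the trace identity $\sum_{|\alpha|=k}\Phi_{\alpha\alpha}=\varphi_k$ compute $G$ explicitly as a Hermite series in $\varphi_k(x-x',u-u')$ (up to the Heisenberg phase). Holomorphic continuation in $4n$ variables then determines $G$ on all of $\C^{4n}$, and restricting to the anti-diagonal $(z',w')=(\bar z,\bar w)$ recovers the modulus-squared integral; this is where the arguments $2iy,2iv$ and the $Sp(n,\R)$-invariant prefactor $e^{u\cdot y-v\cdot x}$ emerge. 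Without this polarisation step (or an equivalent direct computation for complex $(z,w)$), your outline does not close.
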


In the above formula, $ P_k $ are the orthogonal projections appearing in the spectral decomposition of the Hermite operator: thus
$$   f = \sum_{k=0}^\infty  P_kf, \,\,\,\,\,\,    H = \sum_{k=0}^\infty  (2k+n) P_kf  \,\,.$$
If $ L_k^{n-1}(t) $ stand for the Laguerre polynomials of type $ (n-1) $ then the functions $ \varphi_k(z,w) $ are defined by 
$$ \varphi_k(z,w) = L_k^{n-1}(\frac12(z^2+w^2))e^{-\frac14 (z^2+w^2)}.$$
The normalised Hermite functions, indexed by $ \alpha \in \mathbb{N}^n $ are of the form $ \Phi_\alpha(x) = c_\alpha H_\alpha(x) e^{-\frac12 |x|^2} $ where $ H_\alpha $ are the Hermite polynomials. These are eigenfunctions of the Hermite operator $ H $ with eigenvalues $ (2k+n) $ and they form an orthonormal basis for $ L^2(\R^n).$ The Hermite semigroup $ T_t = e^{-tH} $ is then defined by
$$  T_t f(x)  = \sum_{k=0}^\infty  e^{-(2k+n)t} P_kf(x).$$
As $ P_kf $ is a finite linear combination of $ \Phi_\alpha, |\alpha| = k $ it is clear that it has a holomorphic extension to $ \C^n.$ It can be shown that the same is true of $ T_tf$ for all $ f \in L^2(\R^n) $ and the map taking $ f $ into $ T_tf(z) $ is known as the heat kernel transform associated to the Hermite operator. We need one more ingredient which is the analogue of (1.1) in the Hermite  setting. If we  let $$ p_t(y,v) = c_n (\sinh t)^{-n} e^{-\frac14 (\coth t) (y^2+v^2)} $$
then we have the interesting formula
\begin{equation}    \frac{k! (n-1)!}{(k+n-1)!}   \int_{\R^n} \int_{\R^n}  \varphi_k(2iy,2iv)  p_{2t}(2y,2v) dy\,dv = e^{2(2k+n)t}.\end{equation}
We refer to \cite{T1} for a proof of this and to \cite{T2} to see how it is used along with Gutzmer's formula to deduce Theorem 1.5.\\

{\it{Proof of Theorem 1.6:}} We start with the following observation: it is easily seen that the weight function $ U_t(z) $ is equal to the integral
\begin{equation} U_t(\xi+iv)  = \int_{\R^n} p_{2t}(2y,2v) e^{-2 y \cdot \xi} dy= c_n (\sinh 4t)^{-n/2} e^{-(\coth 2t)|v|^2+ (\tanh 2t)|\xi|^2} \end{equation}  Now Gutzmer's formula applied to $ T_t^sf $ gives us
$$ \int_{\R^{2n}} \Big(\int_{\R^n} \int_K |\pi(\sigma(iy,iv))T_t^sf(\xi)|^2 d\sigma d\xi \Big) p_{2t}(2y,2v)  dy dv$$  $$  =  \sum_{k=0}^\infty  e^{2t(2k+n)} (2k+n)^{2m}(a_{t,m-s/2}(k))^2 \|P_kf\|_2^2 $$
where
$$ a_{t,\gamma}(k) =  \frac{1}{\Gamma(\gamma)} \int_0^t r^{\gamma-1} e^{-(r+t)(2k+n)} dr $$ $$= e^{-t(2k+n)} \frac{1}{\Gamma(\gamma)} \int_0^t r^{\gamma-1} e^{-r(2k+n)} dr.$$
Observe that $ e^{t(2k+n)} a_{t,m-s/2}(k) $ behaves like $ (2k+m)^{-m+s/2} $ and hence $$\sum_{k=0}^\infty  e^{2t(2k+n)} (2k+n)^{2m}(a_{t,m-s/2}(k))^2 \|P_kf\|_2^2 $$ is equivalent to the norm in $ W_H^{s,2}(\R^n).$ On the other hand, as $ p_{2t}(2y,2v) $ is invariant under the action of $ K $ we have 
$$ \int_{\R^{2n}} \Big(\int_{\R^n} \int_K |\pi(\sigma(iy,iv))T_t^sf(\xi)|^2 d\sigma d\xi \Big) p_{2t}(2y,2v)  dy dv $$ 
$$ =\int_{\R^{2n}} \Big(\int_{\R^n}  |\pi((iy,iv))T_t^sf(\xi)|^2  d\xi \Big) p_{2t}(2y,2v)  dy dv .$$
Recalling the definition of $ \pi(iy,iv) $ and making use of (3.2) we see that the above integral simplifies to the square of the norm of $ T_t^sf(z) $ in $ L^2(\C^n,U_t).$ This completes the proof of Theorem 1.6.

{\it{Proof of Theorem 1.7:}}  Recall that we have defined the complexified Hermite operator as $ H_\C = \sum_{j=1}^n (-\frac{\partial^2}{\partial z_j^2} +z_j^2).$ When $ F $ is holomorphic, $ \frac{\partial}{\partial z_j} F(z) = \frac{\partial}{\partial x_j}F(z) $ and hence the restriction of $ H_\C F $ to $ \R^n $ is just $ HF.$ Consequently, if $ F $ is an eigenfunction of $ H $ having a holomorphic  extension to $ \C^n $ we see that $ F(z) $ is an eigenfunction of $ H_\C$. In particular, applied to $ P_kf(z) $ we conclude that $ H_\C P_kf(z) = P_k(Hf)(z) $ and this leads to the relation $  H_\C T_tf(z) = T_t(Hf)(z).$ 
$$  \Big(\int_{\R^n} \int_K |\pi(\sigma(iy,iv)) H_\C^m(T_tf)(\xi)|^2 d\sigma d\xi \Big) $$  $$  =  \sum_{k=0}^\infty  e^{-2t(2k+n)} (2k+n)^{2m} \frac{k! (n-1)!}{(k+n-1)!} \varphi_k(2iy,2iv)  \|P_kf\|_2^2 .$$
Integrating the above against the $K$-invariant weight function
 $$ \frac{1}{\Gamma(2m-s)} \int_0^t (t-r)^{2m-s-1} p_{2r}(2y,2v) dr $$
 we obtain the following identity:
 $$ \frac{1}{\Gamma(2m-s)} \int_0^t (t-r)^{2m-s-1}  \Big(\int_{\R^{2n}} \int_{\R^n}  |\pi((iy,iv)) H_\C^m(T_tf)(\xi)|^2  p_{2r}(2y,2v) d\xi  dy dv \Big) $$  $$  =  \sum_{k=0}^\infty  e^{-2t(2k+n)} (2k+n)^{2m}  \Big(  \frac{1}{\Gamma(2m-s)} \int_0^t (t-r)^{2m-s-1} e^{2r(2k+n)} dr \Big)\|P_kf\|_2^2 .$$
 Now the right hand side is equivalent to the square of the $ W_H^{s,2}(\R^n) $ norm of $ f $ whereas the left hand simplifies to 
 $$ \int_{\R^n} \int_{\R^n} |T_tf(\xi+iv) |^2 U_{t,m-s/2}(\xi+iv) d\xi dv $$
 where $  U_{t,m-s/2}(\xi+iv) $ is given by the integral 
 $$ \frac{1}{\Gamma(2m-s)} \int_0^t (t-r)^{2m-s-1} \Big(\int_{\R^n} e^{-2 y \cdot \xi} p_{2r}(2y,2v) dy\Big)  dr $$
 $$ = \frac{1}{\Gamma(2m-s)} \int_0^t (t-r)^{2m-s-1} U_r(\xi+iv)dr.$$
This completes the proof of Theorem 1.7.


\end{document}